\definecolor{labelkey}{rgb}{0.6,0,1}
\theoremstyle{plain}
\newtheorem{theorem}{Theorem}[section]
\newtheorem{lemma}[theorem]{Lemma}
\newtheorem{assumptions}[theorem]{Assumptions}
\newtheorem{corollary}[theorem]{Corollary}
\theoremstyle{definition}
\newtheorem{definition}[theorem]{Definition}
\def\bhyp#1{\begin{equation}\label{#1}\begin{array}{c}}
\def\ehyp{\end{array}\end{equation}}
\newcounter{cst}
\theoremstyle{remark}
\newtheorem{remark}[theorem]{Remark}
\numberwithin{equation}{section}
\numberwithin{figure}{section}
\newcommand{\RR}{{\mathbb R}}
\newcommand{\NN}{{\mathbb N}}
\def\O{\Omega}
\def\dsp{\displaystyle}
\def\bfn{\mathbf{n}}
\def\disc{{\mathcal D}}
\def\mesh{{\mathcal M}}
\newcommand{\polyd}{{\mathcal T}}
\def\edges{{\mathcal E}}
\def\edge{\sigma}
\def\xcv{x_K}
\def\cv{K}
\newcommand{\edgescv}{{{\edges}_K}}  
\newcommand{\edgesext}{{{\edges}_{\rm ext}}} 
\newcommand{\edgesint}{{{\edges}_{\rm int}}} 
\newcommand{\centers}{\mathcal{P}}
\def\dr{\partial}
\newcommand{\centeredge}{\overline{x}_\edge} 
\newcommand{\x}{\pmb{x}}
\newif\ifcorr\corrtrue
\definecolor{violet}{rgb}{0.580,0.,0.827}
\def\bpsi{{\boldsymbol \psi}}
\newcommand{\ud}{\, \mathrm{d}} 
\def\div{\mathop{\rm div}}
\title[Analysis of schemes for reaction diffusion models]{A unified analysis for reaction diffusion models with application to the spiral waves dynamics of the Barkley model}
\author{Yahya Alnashri}
\address[Yahya Alnashri]{Department of Mathematics, Al-Qunfudah University College, Umm Al-Qura University, Saudi Arabia}
\email{yanashri@uqu.edu.sa}
\author{Hasan Alzubaidi}
\address[Hasan Alzubaidi]{Department of Mathematics, Al-Qunfudah University College, Umm Al-Qura University, Saudi Arabia}
\email{yanashri@uqu.edu.sa}
\email{hmzubaidi@uqu.edu.sa}
\subjclass[2010]{35K57,65N12,65M08}
\keywords{Reaction diffusion models, Barkley model, spiral waves, homogeneous Neumann boundary condition, gradient schemes, gradient discretisation, convergence analysis, existence, hybrid mimetic mixed methods.}
\date{\today}
\begin{document}
\newcommand{\subscript}[2]{$#1 _ #2$}

\begin{abstract}
Applying the gradient discretisation method (GDM), the paper develops a comprehensive numerical analysis for the reaction diffusion model. Using only three properties, this analysis provides convergence results for several conforming and non-conforming numerical schemes that align with the GDM. As an application of this analysis, the hybrid mimetic mixed (HMM) method for the reaction diffusion model is designed and its convergence established. Numerical experiments using the HMM method are presented to facilitate study of the creation of spiral waves in the Barkley model and the ways in which the waves behave when interacting with the boundaries of their generating medium.
\end{abstract}

\maketitle


\section{Introduction}
\label{introduction}

\par Some arrhythmias can have fatal consequences if they are permitted to remain in cardiac tissues without undergoing treatment \cite{1,2}. One method by which it may be possible to develop a better understanding of how arrhythmias develop and the most effective means by which they can be treated mathematical modelling. A number of scholars have examined the mathematical modelling approaches that are available and, following the application of these methods, they have concluded that cardiac arrhythmias can be traced back to the free rotation of spiral waves. These waves, reactivate an area of tissue at a higher frequency than that associated with the normal sinoatrial node and, thereby, generate a higher than average heartbeat \cite{1,3,4,5,6}. In some cases, the spiral waves break up into smaller spiral waves that stimulate uncoordinated heart contractions that are referred to as fibrillation. When fibrillation develops in the ventricles of the heart, it causes the heart to tremor and its strength diminishes. Lacking the ability to pump blood at a consistent rate, the heart ultimately suffers from a cardiac arrest \cite{7}.

\par It is possible to study the development and behaviour of spiral waves as a form of spatio-temporal solution to reaction-diffusion equations of the general form \cite{8,9},
\begin{align}
\partial_t \bar u(\x,t) -\mu \div(\nabla \bar u(\x,t))&=f(\bar u,\bar v), \quad (\x,t) \in \O\times (0,T),\label{rm1}\\
\partial_t \bar v(\x,t)&=g(\bar u,\bar v) \quad (\x,t) \in \O\times (0,T\label{rm2}),\end{align}
for diffusion coefficient $\mu$,  with initial conditions and a pure Neumann boundary condition are defined on the domain $\O \subset \RR^d, d\geq 1$. 

\par Although the model has been widely studied theoretically since it enters the general theory of reaction diffusion equations \cite{45}, it is typically very rare for researchers to be able to solve models that pertain to waves of this nature. As such, there is an inherent need for numerical solutions that are accurate and reliable. The existing literature describes the dynamics of spiral waves using the reaction-diffusion models in depth. A variety of mathematical approaches have been employed including finite difference methods (FDM) \cite{10,11,12,13}, pseudo-spectral methods \cite{14,15,16}, finite element methods (FEM) \cite{17,18,19}, and finite volume methods (FVM) \cite{20,21,22,23,24,25,26}. In this paper, we will focus on the literature related to FVM. In this regard, the 1997 work of Harrild and Henriquez \cite{20} is a particular note. Harrild and Henriquez employed the elements of a FVM-based formulation to examine the phenomenon of conduction in cardiac tissue. Trew et al. \cite{21} described the creation of a novel finite volume method that could effectively be employed to model bidomain electrical activation in discontinuous cardiac tissue. Coudire and Pierre\cite{22} examined a 3D FVM that could be effectively used to calculate the electrical activity that could be observed in the myocardium on unstructured meshes and identified stability conditions for two time-stepping methods in distinctive settings. They also generated error estimates by which effective solutions of the monodomain reaction-diffusion systems could be estimated. Bendahmane and Karlsen \cite{23} successfully merged a finite volume scheme with Dirichlet boundary conditions for the bidomain model, representing a degenerate reaction-diffusion system that models the electrophysiological waves that can be observed in cardiac tissue. Bendahmane et al. \cite{24} employed a comparable FVM with Neumann boundary conditions to demonstrate the existence and uniqueness of the approximate solution for the monodomain and bidomain models for the myocardial tissue electrical activity. Burger et al. \cite{25} presented some of fully space-time adaptive multiresolution approaches that were based on a combination of FVM and Barkley's approach for modeling the complex dynamics of waves in excitable media.  More recently in 2017, Coudire and Turpault \cite{26}, originated and evaluated a high-order FVM approach in space combined with a high-order strong stability preserving (SSP) Runge--Kutta technique to produce 2D simulations of spiral waves and simple planar waves in cardiac tissue. Their research also yielded error estimates for the given reaction-diffusion systems.

\par The chief elements of the spiral waves dynamics such as meandering and drift under external perturbations are of great interest. External perturbations
can include light-induced drift, dual spiral interaction, and interaction of spirals with a boundary \cite{27}. When we scrutinize spiral-boundary interactions, we can see two varieties of interaction, with the spiral waves being either reflected or annihilated \cite{27}. Examining spiral-boundary interactions and what occurs in order to annihilate the spiral can assist us in creating techniques to prevent arrhythmia generated by the spiral behaviour. For more details on spiral drift due to the boundary effects, see \cite{27,28,29} and the references given there.

\par This paper's main purpose is to use the gradient discretisation method (GDM) to offer a complete and unified convergence analysis of the reaction-diffusion model. To our knowledge, this analysis provides the first results that are applicable to several conforming and non conforming methods. The GDM is an abstract setting to study the numerical analysis for linear or non linear, steady or time--dependent diffusion partial differential equations (PDEs). Based on a limit number of properties, the GDM can establish convergence of numerical schemes for various models with different boundary conditions. Various studies have established that the GDM covers several families of numerical schemes: conforming, non conforming and mixed finite elements methods (including the non-conforming, Crouzeix Raviart method and the Raviart Thomas method), the discontinuous Galerkin scheme, the vertex approximate gradient (VAG) scheme, hybrid mimetic mixed methods (which contain hybrid mimetic finite differences, hybrid finite volumes/SUSHI scheme and mixed finite volumes), nodal mimetic finite differences, and finite volumes methods (such as some multi-points flux approximation and discrete duality finite volume methods). For more details, see \cite{31,32,33,34,35,36,37,38} and the monograph \cite{30} for a complete presentation.

\par This paper is organised as follows. Section \ref{sec-continous} is devoted to the continuous model. Section \ref{sec-disc-pblm} provides the discrete elements of the GDM and two properties required to analyse the studied model. It also states the discrete problem (the gradient scheme) followed by the main results: existence and uniqueness of the solution to the scheme, and its convergence to the weak solution of the studied model. Section \ref{sec:HMM} explains the role that the GDM plays in applying the the hybrid mixed mimetic (HMM) methods, and establishing its convergence for the considered model. Section \ref{sec-proof} is devoted to proving the main novelty of this paper, Theorem \ref{theorem-conver-rm}, which is obtained via the compactness technique. In Section \ref{sec-numerical} we study numerically using the HMM, the propagation of the spiral waves for the Barkley model and their behavior when they interact with the boundary of the medium where they spread.
\section{Continuous model}\label{sec-continous}
We consider the following system of partial differential equations, representing varieties models such as the Barkley model \cite{10}, the FitzHugh--Nagumo model \cite{39}, the Aliev and Panfilov model \cite{40} and the Belousov--Zhabotinsky reaction \cite{41}. 
\begin{align}
\partial_t \bar u(\x,t) -\mu \div(\nabla \bar u(\x,t))&=f(\bar u,\bar v), \quad (\x,t) \in \O\times (0,T),\label{rm-strong1}\\
\partial_t \bar v(\x,t)&=g(\bar u,\bar v), \quad (\x,t) \in \O\times (0,T),\label{rm-strong2}\\
\nabla\bar u(\x,t)\cdot \mathbf{n}&= 0, \quad (\x,t) \in\partial\O\times (0,T), \label{rm-strong3}\\
\bar u(\x,0)&=u_{\rm ini}(\x), \quad \x \in \O, \label{rm-strong4}\\
\bar v(\x,0)&=v_{\rm ini}(\x), \quad \x \in \O, \label{rm-strong5}
\end{align}
where $\mathbf{n}$ is the outer normal to $\partial\O$. In this excitable media system, the unknowns $\bar u(\x,t)$ and $\bar v(\x,t)$ denote the excitation and recovery terms, respectively. A dynamic of particular case of this model is explained in Section \eqref{sec-numerical}.

Our analysis focuses on the weak formulation of the above reaction diffusion model. Let us assume the following properties on the data of the model.
\begin{assumptions}\label{assump-rm}
The assumptions on the data in Problem \eqref{rm-strong1}--\eqref{rm-strong5} are the following:
\begin{enumerate}
\item $\O$ is an open bounded connected subset of $\RR^d\; (d \geq 1)$, with a Lipschitz boundary, $T >0$, $\mu \in \RR$,
\item $(u_{\rm ini},v_{\rm ini})$ are in $H^1(\O) \times L^\infty(\O)$,
\item the functions $f,\; g: \RR^2 \to \RR$ are in $L^2(\O\times(0,T))$ and  both polynomial functions satisfying the following conditions:
\begin{itemize}
\item they are affine with respect to $\xi$;
\begin{equation}
f(s,\xi)=f_1(s) + f_2(s)\xi, \quad g(s,\xi)=g_1(s) + \alpha \xi,
\end{equation}
where $f_1, f_2, g_1$ are continuous functions defined on $\RR$ and $\alpha$ is a constant.
\item there exists constants $c_i \geq 0 (i = 1...5)$ such that for any $s\in \RR$,
\begin{subequations}
\begin{equation}
|f_1(s)| \leq c_1 + c_2|s|,
\end{equation}
\begin{equation}
|f_2(s)| \leq c_3,
\end{equation}
\begin{equation}
|g_1(s)| \leq c_4 + c_5|s|.
\end{equation}
\end{subequations}

\end{itemize}

\end{enumerate}
\end{assumptions}

\begin{definition}[Weak solution]\label{def-weak}
Under Assumptions \ref{assump-rm}, a pair $(\bar u,\bar v)$ is said to be a weak solution of Problem \eqref{rm-strong1}--\eqref{rm-strong5} if the following properties and equalities hold:
\begin{subequations}\label{rm-weak}
\begin{equation*}
\begin{aligned}
&\bar u \in L^2(0,T;H^1(\O)) \cap C([0,T];L^2(\O)),\; \bar v\in L^2(0,T;L^2(\O)),\\
&\partial_t \bar u \in L^2(\O\times(0,T)),\; \bar u(\cdot,0)=u_{\rm ini},\\
&\mbox{and for all $\bar\varphi \in L^2(0,T;H^1(\O))$ and for all $\bar\psi \in L^2(0,T;L^2(\O))$},\\
&\mbox{such that $\partial_t \bar\psi \in L^2(0,T;L^2(\O))$ and $\bar\psi(\cdot,T)=0$},
\end{aligned}
\end{equation*}
\begin{equation}\label{rm-weak1}
\begin{aligned}
&\dsp\int_0^T \dsp\int_\O  \partial_t \bar u(\x,t) \bar\varphi(\x,t) \ud \x  \ud t 
+\mu\dsp\int_0^T\int_\O \nabla \bar{u}(\x,t) \cdot \nabla \bar\varphi(\x,t)\ud \x \ud t,\\
{}&\quad\quad\quad\quad\quad= \dsp\int_0^T\dsp\int_\O f(\bar u,\bar v)\bar\varphi(\x,t) \ud \x \ud t,
\end{aligned}
\end{equation}
\begin{equation}\label{rm-weak2}
\begin{aligned}
&-\dsp\int_0^T \dsp\int_\O  \bar v(\x,t)  \partial_t\bar\psi(\x,t) \ud \x \ud t 
-\dsp\int_\O v_{\rm ini}(\x)\bar\psi(\x,0) \ud x\\
&\qquad\quad\quad\quad\quad= \dsp\int_0^T\dsp\int_\O g(\bar u,\bar v)\bar\psi(\x,t) \ud \x \ud t.
\end{aligned}
\end{equation}
\end{subequations}
\end{definition}

\begin{remark}
The existence of at least one solution $(\bar u,\bar v)$ to \eqref{rm-weak} will be a consequence of the convergence analysis of the gradient discretisation method.
\end{remark}

\section{Discrete Problem}\label{sec-disc-pblm}
As stated in the introduction section, the analysis of numerical schemes for the approximation of solutions to the reaction diffusion model is performed using the gradient discretisation method. This method first requires the reconstruction of a set of discrete spaces and operators, which is called gradient discretisation (GD).

\begin{definition}[gradient discretisation for reaction diffusion model]\label{def-gd-rm}
Let $\O$ be an open subset of $\RR^d$ (with $d \geq 1$) and $T>0$. A gradient discretisation for the reaction--diffusion model is $\disc=(X_{\disc}, Y_\disc, \Pi_\disc, \Pi_{\disc^{'}}, \nabla_\disc, J_\disc, J_{\disc^{'}},  (t^{(n)})_{n=0,...,N}) )$, where
\begin{itemize}
\item the two set of discrete unknowns $X_{\disc}$ and $Y_\disc$ are finite dimensional vector spaces on $\RR$,
\item the function reconstruction $\Pi_\disc : X_{\disc} \to L^2(\O)$ is a linear,
\item the function reconstruction $\Pi_{\disc^{'}} : Y_{\disc} \to L^2(\O)$ is a linear, and must be defined so that $|| \Pi_{\disc^{'}}\cdot ||_{L^2(\O)}$ is a norm on $Y_\disc$,
\item the gradient reconstruction $\nabla_\disc : X_{\disc} \to L^2(\O)^d$ is a linear and must be defined so that
\begin{equation}\label{norm-disc}
\| \varphi ||_\disc = || \Pi_\disc \varphi ||_{L^2(\O)}+ || \nabla_\disc \varphi ||_{L^2(\O)^d} 
\end{equation}
is a norm on $X_{\disc}$. 
\item $J_\disc: H^1(\O) \to X_\disc$ and $J_{\disc^{'}}: L^\infty(\O) \to Y_\disc$ are a linear and continuous interpolation operator for the
 initial conditions,
\item $t^{(0)}=0<t^{(1)}<....<t^{(N)}=T$ are discrete times. 
\end{itemize}
\end{definition}


Let us introduce some notations to define the space--time reconstructions $\Pi_\disc \varphi: \O\times[0,T]\to \RR$, and $\nabla_\disc \varphi: \O\times[0,T]\to \RR^d$, and the discrete time derivative $\delta_\disc \varphi : (0,T) \to L^2(\O)$, for $\varphi=(\varphi^{(n)})_{n=0,...,N} \in X_\disc^{N}$ and $\psi=(\psi^{(n)})_{n=0,...,N} \in Y_\disc^{N}$ 

For a.e $\x\in\O$, for all $n\in\{0,...,N-1 \}$ and for all $t\in (t^{(n)},t^{(n+1)}]$, let
\begin{equation*}
\begin{split}
&\Pi_\disc \varphi(\x,0)=\Pi_\disc \varphi^{(0)}(\x), \quad \Pi_\disc \varphi(\x,t)=\Pi_\disc \varphi^{(n+1)}(\x),\\
&\nabla_\disc \varphi(\x,t)=\nabla_\disc \varphi^{(n+1)}(\x),\\
&\Pi_{\disc^{'}} \psi(\x,0)=\Pi_{\disc^{'}} \psi^{(0)}(\x), \mbox{ and } \Pi_{\disc^{'}} \psi(\x,t)=\Pi_{\disc^{'}} \psi^{(n+1)}(\x).
\end{split}
\end{equation*}
Set $\delta t^{(n+\frac{1}{2})}=t^{(n+1)}-t^{(n)}$ and $\delta t_\disc=\max_{n=0,...,N-1}\delta t^{(n+\frac{1}{2})}$, to define
\begin{equation*}
\delta_\disc \varphi(t)=\delta_\disc^{(n+\frac{1}{2})}\varphi:=\frac{\Pi_\disc(\varphi^{(n+1)}-\varphi^{(n)})}{\delta t^{(n+\frac{1}{2})}} \mbox{ and }
\delta_{\disc^{'}} \psi(t)=\delta_{\disc^{'}}^{(n+\frac{1}{2})}\psi:=\frac{\Pi_{\disc^{'}}(\psi^{(n+1)}-\psi^{(n)})}{\delta t^{(n+\frac{1}{2})}}.
\end{equation*}

In order to build converging schemes for models including homogenous Neumann boundary conditions, the elements of a GD must enjoy as much as possible the properties of the continuous space and operators, coercivity, consistency and limit-conformity. In our model, there is no need to the coercivity property, which gives the discrete Poincare inequality, since the time derivative in our model plays the same role in establishing $L^2$ estimate on the discrete solution.

A consistency of the method (refers to an interplant error), which ensures the accuracy of approximating smooth functions and their gradients by elements defined from the discrete space, and the convergence of the time steps to zero. 

\begin{definition}[Consistency]\label{def:cons-rm}
If $\disc$ is a gradient discretisation in the sense of Definition \ref{def-gd-rm}, define
$S_{\mathcal{D}} : H^1(\O)\to [0, +\infty)$ and $ S_{\disc^{'}} : L^2(\O)\to [0, +\infty)$ by
\begin{equation}\label{cons-rm1}
\forall \varphi\in H^1(\O), \; S_{\mathcal{D}}(\varphi)= 
\min_{w\in X_\disc}\left(\| \Pi_{\mathcal{D}} w - \varphi \|_{L^{2}(\Omega)}
+ \| \nabla_{\mathcal{D}} w - \nabla \varphi \|_{L^{2}(\Omega)^{d}}\right),
\end{equation}
and
\begin{equation}\label{cons-rm2}
\forall \psi\in L^2(\O), \; S_{\disc^{'}}(\psi)= 
\min_{w\in Y_\disc} \| \Pi_{\disc^{'}} w - \psi \|_{L^{2}(\Omega)}.
\end{equation}
A sequence $(\mathcal{D}_{m})_{m \in \mathbb{N}}$ of gradient discretisations is \emph{consistent} if, as $m \to \infty$ 
\begin{itemize}
\item for all $\varphi \in H^1(\O)$, $S_{\disc_m}(\varphi) \to 0$, and for all $\psi \in L^2(\O)$, $ S_{\disc_m^{'}}(\psi) \to 0$,
\item for all $w \in L^2(\O)$, $\Pi_{\disc_m}J_{\disc_m}w \to w$ in $L^2(\O)$,
\item for all $w\in L^\infty(\O)$, $(\Pi_{\disc_m^{'}}J_{\disc_m^{'}}w)_{m\in\NN}$ is bounded in $L^\infty(\O)$ and converges to $w$ in $L^2(\O)$,
\item $\delta t_{\disc_m} \to 0$.
\end{itemize}
\end{definition}
\par The quantity $W_\disc$ defined below measures how well the discrete Stokes formula is satisfied, it is only exactly ensured in conforming methods.
\begin{definition}[Limit-conformity]\label{def:lconf-rm}
If $\disc$ is a gradient discretisation in the sense of Definition \ref{def-gd-rm}, and $H_{\rm div}=\{\bpsi \in L^2(\O)^d\;:\; {\rm div}\bpsi \in L^2(\O),\; \bpsi\cdot\bfn=0 \mbox{ on } \partial\O  \}$, define $W_{\mathcal{D}} : H_{\rm div} \to [0, +\infty)$ by
\begin{equation}\label{long-rm}
\forall \bpsi \in H_{\rm div},\quad W_{\mathcal{D}}(\bpsi)
 = \sup_{w\in X_\disc\setminus \{0\}}\frac{\Big|\dsp\int_{\Omega}(\nabla_{\mathcal{D}}w\cdot \bpsi + \Pi_{\mathcal{D}}w \div (\bpsi)) \ud x \Big|}{|| w||_\disc }.
\end{equation}
A sequence $(\disc_m)_{m\in \NN}$ of gradient discretisations is \emph{limit-conforming} if for all $\bpsi \in H_{\rm div}$, $W_{\disc_m}(\bpsi) \to 0$, as $m \to \infty$.
\end{definition}
Finally, dealing with non linearity requires the operators $\Pi_\disc$ and $\nabla_\disc$ to provide the compactness properties, defined below. Since $f$ and $g$ are assumed to be affine functions, there is no need for the operator $\Pi_{\disc^{'}}$ to afford this property.    
\begin{definition}[Compactness]\label{def:compact}
A sequence of gradient discretisation $\disc_m$ in the sense of Definition \ref{def-gd-rm} is \emph{compact} if for any sequence $(\varphi_m )_{m\in\NN} \in X_{\disc_m}$, such that $(|| \varphi_m ||_{\disc_m})_{m\in \NN}$ is bounded, the sequence $(\Pi_{\disc_m}\varphi_m )_{m\in \NN}$ is relatively compact in $L^2(\O)$.
\end{definition}

\par Setting the gradient discretisation defined previously in the place of the continuous space and operators in the weak formulation of the model leads to a numerical scheme, called a gradient scheme (GS).

\begin{definition}[Gradient scheme for \eqref{rm-weak}]\label{def-gs-rm} Find families $(u^{(n)})_{n=0,...,N} \in X_\disc^{N+1}$ and $(v^{(n)})_{n=0,...,N} \in Y_\disc^{N+1}$, such that $u^{(0)}=J_\disc u_{\rm ini}$ and $v^{(0)}=J_{\disc^{'}} v_{\rm ini}$, and for all $n=0,...,N-1$, $u^{(n+1)}$ and $v^{(n+1)}$ satisfy
\begin{subequations}\label{rm-disc-pblm}
\begin{equation}\label{rm-disc-pblm1}
\begin{aligned}
&\dsp\int_\O \delta_\disc^{(n+\frac{1}{2})} u(\x) \Pi_\disc \varphi(\x)
+ \mu\dsp\int_\O \nabla_\disc u^{(n+1)}(\x) \cdot \nabla_\disc \varphi(\x)\ud \x\\
&\qquad=\dsp\int_\O f_\disc^{(n+1)}\Pi_\disc \varphi(\x) \ud \x, \quad \forall \varphi \in X_\disc,
\end{aligned}
\end{equation}
\begin{equation}\label{rm-disc-pblm2}
\dsp\int_\O \delta_{\disc^{'}}^{(n+\frac{1}{2})} v(\x) \Pi_{\disc^{'}} \psi(\x) = \dsp\int_\O g_\disc^{(n+1)}\Pi_{\disc^{'}} \psi(\x) \ud \x ,\quad \forall \psi \in Y_\disc,
\end{equation}
\end{subequations}
where the discrete reaction terms are defined by
\[f_\disc^{(n+1)}=f(\Pi_\disc u^{(n+1)}(\x), \Pi_{\disc^{'}} v^{(n+1)}(\x)) \mbox{ and }
g_\disc^{(n+1)}=g(\Pi_\disc u^{(n+1)}(\x), \Pi_{\disc^{'}} v^{(n+1)}(\x)).
\]
\end{definition}

The convergence results of this gradient scheme is stated in the following theorem, whose proof is detailed in Section \ref{sec-proof}.

\begin{theorem}[{\bf Convergence of the GS for the reaction diffusion model}]
\label{theorem-conver-rm} Assume \eqref{assump-rm} and Let $(\disc_m)_{m\in\NN}$ be a sequence of gradient discretisations in the sense of Definition \ref{def-gd-rm}, that is consistent, limit-conforming and compact in the sense of Definitions \ref{def:cons-rm}, \ref{def:lconf-rm} and \ref{def:compact}. For $m \in \NN$, let $(u_m,v_m)$ be a solution to the gradient scheme \eqref{rm-disc-pblm} with $\disc=\disc_m$ and $||\nabla_{\disc_m}J_{\disc_m}u_{\rm ini}||_{L^2(\O)^d}$ is bounded. Then there exists a weak solution $(\bar u,\bar v)$ of \eqref{rm-weak} and a subsequence of gradient discretisations, still denoted by $(\disc_m)_{m\in\NN}$, such that, as $m \to \infty$,
\begin{enumerate}
\item $\Pi_{\disc_m} u_m$ converges strongly to $\bar u$ in $L^\infty(0,T;L^2(\O))$,
\item $\Pi_{\disc_m^{'}} v_m$ converges weakly to $\bar v$ in $L^\infty(0,T;L^2(\O))$,
\item $\nabla_{\disc_m}u_m$ converges strongly to $\nabla\bar u$ in $L^2(\O\times(0,T))^d$. 
\end{enumerate}
\end{theorem}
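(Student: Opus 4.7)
The plan is to follow the standard GDM blueprint: establish uniform discrete estimates, extract convergent subsequences via compactness, pass to the limit in the nonlinear scheme, and finally upgrade weak convergence of the gradient to strong convergence by an energy argument. In what follows I write $\disc'$ for $\disc^{'}$.

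First I would test \eqref{rm-disc-pblm1} against $\delta t^{(n+\frac{1}{2})} u^{(n+1)}$ and \eqref{rm-disc-pblm2} against $\delta t^{(n+\frac{1}{2})} v^{(n+1)}$, apply the telescoping inequality $a(a-b)\ge \frac{1}{2}(a^2-b^2)$, sum over $n$, bound the right-hand sides via Young's inequality and the polynomial growth in Assumption~\ref{assump-rm}, and close by a discrete Gronwall lemma. This yields uniform-in-$m$ bounds for $\|\Pi_{\disc_m}u_m\|_{L^\infty(0,T;L^2(\O))}$, $\|\nabla_{\disc_m}u_m\|_{L^2(\O\times(0,T))^d}$ and $\|\Pi_{\disc'_m}v_m\|_{L^\infty(0,T;L^2(\O))}$. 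Testing \eqref{rm-disc-pblm1} against $u^{(n+1)}-u^{(n)}$ and using the hypothesis that $\|\nabla_{\disc_m}J_{\disc_m}u_{\rm ini}\|_{L^2}$ is bounded then produces an $L^2(\O\times(0,T))$ bound on $\delta_{\disc_m}u_m$, which will pass in the limit to $\partial_t\bar u\in L^2(\O\times(0,T))$. Combined with the compactness hypothesis (Definition~\ref{def:compact}) and a discrete Aubin--Simon argument, one obtains, along a subsequence, $\Pi_{\disc_m}u_m\to\bar u$ strongly in $L^2(\O\times(0,T))$, upgraded to $L^\infty(0,T;L^2(\O))$ by an equicontinuity-in-time estimate. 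Limit-conformity (Definition~\ref{def:lconf-rm}) identifies the weak limit of $\nabla_{\disc_m}u_m$ as $\nabla\bar u$ and places $\bar u\in L^2(0,T;H^1(\O))$; weak-$\ast$ compactness yields $\Pi_{\disc'_m}v_m\rightharpoonup\bar v$ in $L^\infty(0,T;L^2(\O))$.

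To pass to the limit in the scheme, I would take smooth test functions $\bar\varphi$ and $\bar\psi$ (with $\bar\psi(\cdot,T)=0$) and discretise them through the elements of $X_{\disc_m}$ and $Y_{\disc_m}$ that realise the minima in \eqref{cons-rm1}--\eqref{cons-rm2}; consistency ensures these interpolants (and, for $\bar\varphi$, their discrete gradient) converge strongly. All linear terms pass immediately. For the nonlinearity $f(u,v)=f_1(u)+f_2(u)v$, the strong convergence of $\Pi_{\disc_m}u_m$ combined with continuity and growth of $f_1,f_2$ gives $f_i(\Pi_{\disc_m}u_m)\to f_i(\bar u)$ strongly in $L^2(\O\times(0,T))$, and multiplying this against the weakly convergent $\Pi_{\disc'_m}v_m$ closes the passage by a strong--weak product. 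The term $g(u,v)=g_1(u)+\alpha v$ is handled identically thanks to its affine structure in $v$. Consistency of $J_{\disc_m}$ and $J_{\disc'_m}$ supplies the initial conditions.

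Finally, strong convergence of $\nabla_{\disc_m}u_m$ follows from the classical energy trick: testing \eqref{rm-disc-pblm1} by $u_m^{(n+1)}$ and summing gives a discrete energy identity; taking $\limsup_m$, using the strong convergence of $\Pi_{\disc_m}u_m$ and the weak form just established (evaluated at $\bar\varphi=\bar u$), one obtains $\|\nabla_{\disc_m}u_m\|_{L^2(\O\times(0,T))^d}\to\|\nabla\bar u\|_{L^2(\O\times(0,T))^d}$; norm convergence combined with weak convergence yields the desired strong convergence. The main anticipated obstacle is the treatment of the bilinear coupling $f_2(\Pi_{\disc_m}u_m)\Pi_{\disc'_m}v_m$, for which the strong--weak product structure above is essential and relies crucially on the compactness hypothesis supplying strong $L^2$ convergence of $\Pi_{\disc_m}u_m$; a closely related technical hurdle is upgrading that space--time strong convergence to $L^\infty(0,T;L^2(\O))$, which requires a uniform-in-time equicontinuity estimate going beyond what Definition~\ref{def:compact} directly provides.
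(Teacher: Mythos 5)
Your proposal is correct and follows essentially the same route as the paper: the same a priori estimates (testing by $\delta t^{(n+\frac12)}u^{(n+1)}$, $\delta t^{(n+\frac12)}v^{(n+1)}$ and by $u^{(n+1)}-u^{(n)}$), compactness and limit-conformity to extract limits and identify $\nabla\bar u$, strong--weak products for the affine-in-$v$ nonlinearities, and the $\limsup$ energy identity for strong gradient convergence. The only difference is presentational: where you invoke a discrete Aubin--Simon argument plus a time-equicontinuity estimate to reach $L^\infty(0,T;L^2(\O))$, the paper delegates exactly this step to a cited black-box result (\cite[Theorem 4.18]{30}), and it handles the $\bar\psi$ test function via an explicit discrete integration by parts, which your ``linear terms pass immediately'' implicitly covers.
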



\section{The hybrid mixed mimetic (HMM) methods}\label{sec:HMM}
We show here that hybrid mixed mimetic (HMM) methods can be expressed as gradient schemes formulation when applied to the reaction diffusion model. It is shown in \cite{42} that the HMM methods is a generic setting gathering three different families of methods, namely the hybrid finite volume method, the (mixed-hybrid) mimetic finite differences methods, and the mixed finite volume methods. In order to construct this mesh--based method, we recall here the notion of polytopal mesh \cite{42}.  

\begin{definition}[Polytopal mesh]\label{def:polymesh}~
Let $\Omega$ be a bounded polytopal open subset of $\RR^d$ ($d\ge 1$). 
A polytopal mesh of $\O$ is given by $\polyd = (\mesh,\edges,\centers)$, where:
\begin{enumerate}
\item $\mesh$ is a finite family of non empty connected polytopal open disjoint subsets of $\O$ (the cells) such that $\overline{\O}= \dsp{\cup_{K \in \mesh} \overline{K}}$.
For any $K\in\mesh$, $|K|>0$ is the measure of $K$ and $h_K$ denotes the diameter of $K$.

\item $\edges$ is a finite family of disjoint subsets of $\overline{\O}$ (the edges of the mesh in 2D,
the faces in 3D), such that any $\edge\in\edges$ is a non empty open subset of a hyperplane of $\RR^d$ and $\edge\subset \overline{\O}$.
We assume that for all $K \in \mesh$ there exists  a subset $\edgescv$ of $\edges$
such that $\dr K  = \dsp{\cup_{\edge \in \edgescv}} \overline{\edge}$. 
We then set $\mesh_\edge = \{K\in\mesh\,:\,\edge\in\edgescv\}$
and assume that, for all $\edge\in\edges$, $\mesh_\edge$ has exactly one element
and $\edge\subset\partial\O$, or $\mesh_\edge$ has two elements and
$\edge\subset\O$. 
$\edgesint$ is the set of all interior faces, i.e. $\edge\in\edges$ such that $\edge\subset \O$, and $\edgesext$ the set of boundary
faces, i.e. $\edge\in\edges$ such that $\edge\subset \dr\O$.
For $\edge\in\edges$, the $(d-1)$-dimensional measure of $\edge$ is $|\edge|$,
the centre of mass of $\edge$ is $\centeredge$, and the diameter of $\edge$ is $h_\edge$.

\item $\centers = (x_K)_{K \in \mesh}$ is a family of points of $\O$ indexed by $\mesh$ and such that, for all  $K\in\mesh$,  $\xcv\in K$ ($\xcv$ is sometimes called the ``centre'' of $\cv$). 
We then assume that all cells $K\in\mesh$ are  strictly $\xcv$-star-shaped, meaning that 
if $x\in \overline{K}$ then the line segment $[\xcv,x)$ is included in $K$.
\end{enumerate}
For a given $K\in \mesh$, let $\bfn_{K,\sigma}$ be the unit vector normal to $\sigma$ outward to $K$
and denote by $d_{K,\sigma}$ the orthogonal distance between $x_K$ and $\sigma\in\mathcal E_K$.
The size of the discretisation is $h_\mesh=\sup\{h_K\,:\; K\in \mesh\}$.
\end{definition}

Let $\polyd$ be a polytopal mesh. A gradient discretisation $(X_\disc, Y_\disc, \Pi_\disc, \Pi_{\disc^{'}}, \nabla_\disc,J_\disc, J_{\disc^{'}})$ in the setting of HMM method formats are then constructed by setting
\[
\begin{aligned}
&X_{\disc}=\{ \varphi=((\varphi_{K})_{K\in \mathcal{M}}, (\varphi_{\sigma})_{\sigma \in \mathcal{E}})\;:\; \varphi_{K} \in \RR,\, \varphi_{\sigma} \in \RR
\},\\
&Y_{\disc}=\{ \varphi=(\varphi_{K})_{K\in \mathcal{M}}\;:\; \varphi_{K} \in \RR
\},\\
&\forall K\in\mathcal M\,:\,\Pi_\disc \varphi=\Pi_{\disc^{'}}\varphi=\varphi_K\mbox{ on $K$},\\
&\forall w \in L^2(\O)\;:\; J_\disc w=((w_K)_{K\in \mesh}, (w_\edge)_{\edge\in \edges_K}) \in X_\disc,\\
&\forall w \in L^\infty(\O),\; J_{\disc^{'}} w=(w_K)_{K\in \mesh} \in Y_\disc,\\
&\mbox{where } w_k = \frac{1}{|K|}\int_K w(\x) \ud \x \mbox{ and } w_\edge =0,\\
&\forall \varphi\in X_\disc,\; \forall K\in\mathcal M,\,\forall \sigma\in\mathcal E_K,\\
&\nabla_\disc \varphi=\nabla_{K}\varphi+
\frac{\sqrt{d}}{d_{K,\sigma}}R_K(\varphi)\mathbf{n}_{K,\sigma} \mbox{ on } D_{K,\edge},
\end{aligned}
\]
where where a cell--wise constant gradient $\nabla_K(\varphi)$ and a stabilisation term $R_K(\varphi)$ are respectively defined by:
\[
\nabla_{K}\varphi= \frac{1}{|K|}\sum_{\sigma\in \edgescv}|\sigma|\varphi_\edge\mathbf{n}_{K,\sigma} \mbox{ and } R_K(\varphi)=(\varphi_\edge - \varphi_K - \nabla_K \varphi\cdot(\centeredge-x_K))_{\edge\in\edges_K}.\\
\]
 
The gradient scheme \eqref{rm-disc-pblm} coming from such a GD can be given by: find $(u^{(n)})_{n=0,...,N} \in X_\disc^{N+1}$ and $(v^{(n)})_{n=0,...,N} \in Y_\disc^{N+1}$, such that $u^{(0)}=J_\disc u_{\rm ini}$ and $v^{(0)}=J_{\disc^{'}} v_{\rm ini}$, and for all $n=0,...,N-1$, $u^{(n+1)}$ and $v^{(n+1)}$ satisfy, for all $\varphi \in X_\disc$ and for all $\psi \in Y_\disc$,
\begin{equation}\label{rm-hmm}
\left.
\begin{aligned}
&\dsp\sum_{K\in \mesh}\frac{|K|}{\delta t^{(n+\frac{1}{2})}}\Big( u_K^{(n+1)}-u_K^{(n)} \Big)\varphi_K
+\dsp\sum_{K\in \mesh}|K|\Lambda_K\nabla_K u^{(n+1)}\cdot \nabla_K \varphi\\
&\quad+\dsp\sum_{K\in \mesh}(R_K\varphi)^T \mathbb B_K R_K (u^{(n+1)})
=\dsp\sum_{K\in \mesh}\varphi_K\dsp\int_K f(u_K^{(n+1)},v_K^{(n+1)}) \ud \x,\\
&\dsp\sum_{K\in \mesh}\frac{|K|}{\delta t^{(n+\frac{1}{2})}}\Big( v_K^{(n+1)}-v_K^{(n)} \Big)\psi_K
=\dsp\sum_{K\in \mesh}\psi_K\dsp\int_K g(u_K^{(n+1)},v_K^{(n+1)}) \ud \x,
\end{aligned}
\right.
\end{equation}
where $\mathbb B_K$ is a symmetric positive definite matrix of size $\mbox{Card}(\edges_K)$.

The  above HMM scheme can be formulated as a finite volume scheme (a format used in the computational processes). To do so, introduce the linear fluxes $u\mapsto F_{K,\sigma}(u)$ (for $K\in\mesh$ and $\sigma\in\edges_K$) defined in \cite{38}:
for all $K \in \mesh$ and all $u,w \in X_\disc$,
\begin{align*}
\sum_{\sigma \in \mathcal{E}_K}|\sigma| F_{K,\sigma}(u)
(w_K-w_\sigma)={}&\mu\int_K \nabla_\disc u\cdot\nabla_\disc w \ud \x.
\end{align*}

Then Problem \eqref{rm-hmm} can be recast as, for all $n=0,...,N-1$,
\begin{align*}
\frac{|K|}{\delta t^{(n+\frac{1}{2})}} \Big( u^{(n+1)}-u^{(n)} \Big)&+\sum_{\sigma \in \mathcal{E}_K}|\sigma|F_{K,\sigma}(u^{(n+1)})= |K| f_K^{(n+1)}, \quad \forall K \in \mesh\\
\frac{|K|}{\delta t^{(n+\frac{1}{2})}} \Big( v^{(n+1)}-v^{(n)} \Big)
&=|K|g_K^{(n+1)}, \quad \forall K \in \mesh\\
F_{K,\sigma}(u^{(n+1)})+F_{L,\sigma}(u^{(n+1)})&= 0, \quad \forall \sigma\in\mathcal E_{\rm int}\mbox{ with }
\mesh_\sigma=\{K,L\},\\
F_{K,\sigma}(u^{(n+1)})&=0, \quad \forall K \in \mesh\,,\forall \sigma \in \mathcal{E}_K \mbox{ such that }\sigma\subset  \partial\O,
\end{align*}
where 
$f_K^{(n+1)}=\frac{1}{|K|}\int_K f(u_K^{(n+1)},v_K^{(n+1)}) \ud \x$ and $g_K^{(n+1)}=\frac{1}{|K|}\int_K g(u_K^{(n+1)},v_K^{(n+1)}) \ud \x, \; \forall K \in \mesh$. 

Now, let us discuss the convergence of the scheme \eqref{rm-hmm}. Assume the existence of $\theta>0$ such that,
for any $m\in\NN$,
\begin{equation}\label{reg.HMM.1}
\begin{aligned}
{\theta_\mesh:=}\max_{K\in\mesh_m}&\left(\dsp\max_{\edge\in\edges_K}\frac{h_K}{d_{K,\edge}}+{\rm Card}(\edges_K)\right)
+\max_{\stackrel{\mbox{\scriptsize $\edge\in \edges_{m,\rm int}$}}{\mesh_\edge=\{K,L\}}}\left( \frac{d_{K,\edge}}{d_{L,\edge}}+\frac{d_{L,\edge}}{d_{K,\edge}} \right)\leq \theta
\end{aligned}\end{equation}
and, for all $K\in \mesh_m$ and $\gamma \in \RR^{\edges_K}$,
\begin{equation}\label{reg.HMM.2}
\begin{aligned}
\frac{1}{\theta}\dsp\sum_{\sigma\in\edges_K}|D_{K,\sigma}| \left| \frac{R_{K,\sigma}(\gamma)}{d_{K,\sigma}} \right|^2
\leq{}& \dsp\sum_{\sigma\in\edges_K} |D_{K,\sigma}| \left| \frac{(A_K R_K(\gamma))_\sigma}{d_{K,\sigma}} \right|^2\\
\leq{}& \theta \dsp\sum_{\sigma\in\edges_K} |D_{K,\sigma}| \left| \frac{R_{K,\sigma}(\gamma)}{d_{K,\sigma}} \right|^2.
\end{aligned}
\end{equation}

Under the above boundedness assumptions on the mesh regularity parameter $\theta_\mesh$, the limit-conformity and the compactness properties defined in Section \ref{sec-disc-pblm} directly follow from the results in \cite[Theorem 13.18]{30}. It remains to prove the consistency. Proving that, for all $\bar\varphi \in H^1(\O)$, $\lim_{m \to \infty}S_{\disc_m}(\bar\varphi)=0$ is as in the case of the
HMM method for PDEs, see \cite[Theorem 4.14]{30}. Also, with the same manner, we can show that, for all $\bar\psi \in L^2(\O)$, $\widehat S_{\disc_m}(\bar\psi) \to 0$, as $m \to \infty$. Let $\varphi_m=( (\varphi_K)_{K\in\mesh}, (\varphi_\sigma)_{\sigma\in\edges} ) \in X_{\disc_m}$ and $\psi_m=(\psi_K)_{K\in\mesh}\in Y_{\disc_m}$ be the two interplants such that $\varphi_m=J_{\disc_m}u_{\rm{ini}}$ and $\psi_m=J_{\disc_m^{'}}v_{\rm{ini}}$. Using \cite[Estimate (B.11), in Lemma B.6]{30} with $p=2$, we can shows that $|| u_{\rm ini}-\Pi_{\disc_m}J_{\disc_m} u_{\rm ini} ||_{L^2(\O)} \to 0$ and $|| v_{\rm ini}-\Pi_{\disc_m^{'}}J_{\disc_m^{'}} v_{\rm ini} ||_{L^2(\O)} \to 0$, as $m \to \infty$, which completes the consistency property.

It is proved in \cite[Theorem 13.14]{30} that, for $w \in W^{1,p}(\O)$, there exists an interpolant $\varphi_m=( (\varphi_K)_{K\in\mesh}, (\varphi_\sigma)_{\sigma\in\edges} ) \in X_{\disc_m}$, such that there is a constant $C_1>0$ not depending on $m$, and 
\[
|| \nabla_{\disc_m}\varphi_m ||_{L^p(\O)^d} \leq C_1 || \nabla w ||_{L^p(\O)^d}.
\]
The boundedness of $|| \nabla_{\disc_m} J_{\disc_m} u_{\rm ini}||_{L^p(\O)^d}$ is a consequence of applying the above estimate to $w=u_{\rm ini}$ (with $p=2$). The convergence of the HMM scheme for the reaction diffusion model is a consequence of Theorem \ref{theorem-conver-rm} if the discrete time steps $\delta t_{\disc_m}^{(n+\frac{1}{2})}$ tends to 0, as $m \to \infty$.

\section{Proof of the convergence results}\label{sec-proof}
In order to prove the convergence results, we first establish some preliminaries estimates on the solution and its gradient of the scheme.

\begin{lemma}[Estimates]
\label{lemma-est-rm}
Under Assumptions \eqref{assump-rm}, let $\disc$ be a gradient discretisation in the sense of Definition \ref{def-gd-rm} and let $(u,v) \in X_\disc \times Y_\disc$ be a solution of the gradient scheme \eqref{rm-disc-pblm}. Then there exists a constant $C_2\geq 0$ depending only on $C_{\rm ini} > \max(|| \Pi_\disc u^{(0)} ||_{L^2(\O)}, || \Pi_{\disc^{'}} v^{(0)} ||_{L^2(\O)})$, such that
\begin{equation}\label{est-rm}
||\Pi_\disc u ||_{L^\infty(0,T;L^2(\O))}
+||\Pi_{\disc^{'}} v ||_{L^\infty(0,T;L^2(\O))} 
+||\nabla_\disc u ||_{ L^2(\O \times (0,T))^d }
\leq C_2 .
\end{equation}
\end{lemma}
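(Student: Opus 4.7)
\textbf{Proof proposal for Lemma \ref{lemma-est-rm}.} The plan is to run a standard energy estimate on each discrete equation and then close the coupling via Young's inequality and a discrete Grönwall argument.

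First, I would test \eqref{rm-disc-pblm1} with $\varphi = \delta t^{(n+\frac{1}{2})} u^{(n+1)}$ and \eqref{rm-disc-pblm2} with $\psi = \delta t^{(n+\frac{1}{2})} v^{(n+1)}$. For the discrete time derivative, I would exploit the elementary identity $(a-b)a \ge \tfrac12(a^2 - b^2)$ applied pointwise to $a = \Pi_\disc u^{(n+1)}$, $b = \Pi_\disc u^{(n)}$ (and analogously for $v$). This produces a telescoping structure
\begin{equation*}
\tfrac12\bigl(\|\Pi_\disc u^{(n+1)}\|_{L^2}^2 - \|\Pi_\disc u^{(n)}\|_{L^2}^2\bigr) + \mu\,\delta t^{(n+\frac{1}{2})}\|\nabla_\disc u^{(n+1)}\|_{L^2}^2 \le \delta t^{(n+\frac{1}{2})}\!\int_\O f_\disc^{(n+1)} \Pi_\disc u^{(n+1)} \ud\x,
\end{equation*}
and an analogous inequality for the $v$-equation with the right-hand side involving $g_\disc^{(n+1)} \Pi_{\disc'} v^{(n+1)}$.

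Next, I would control the reaction integrals using Assumptions \ref{assump-rm}. Since $|f(s,\xi)| \le c_1 + c_2|s| + c_3|\xi|$ and $|g(s,\xi)| \le c_4 + c_5|s| + |\alpha||\xi|$, Cauchy--Schwarz and Young's inequality yield, for some constant $C>0$ depending only on the $c_i$, $\alpha$ and $|\O|$,
\begin{equation*}
\int_\O f_\disc^{(n+1)} \Pi_\disc u^{(n+1)} \ud\x + \int_\O g_\disc^{(n+1)} \Pi_{\disc'} v^{(n+1)}\ud\x \le C\bigl(1 + \|\Pi_\disc u^{(n+1)}\|_{L^2}^2 + \|\Pi_{\disc'} v^{(n+1)}\|_{L^2}^2\bigr).
\end{equation*}
Setting $X^{(n)} := \|\Pi_\disc u^{(n)}\|_{L^2}^2 + \|\Pi_{\disc'} v^{(n)}\|_{L^2}^2$, adding the two telescoping inequalities and summing from $n=0$ up to an arbitrary $k-1 \le N-1$ gives
\begin{equation*}
X^{(k)} + 2\mu \sum_{n=0}^{k-1}\delta t^{(n+\frac{1}{2})}\|\nabla_\disc u^{(n+1)}\|_{L^2}^2 \le X^{(0)} + 2CT + 2C\sum_{n=1}^{k}\delta t^{(n-\frac{1}{2})} X^{(n)}.
\end{equation*}

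Finally, provided $\delta t_\disc$ is small enough to absorb the $X^{(k)}$ contribution from the last term on the right (which is assumed since the sequence of discretisations is consistent and hence $\delta t_{\disc_m}\to 0$), a standard discrete Grönwall lemma yields $X^{(k)} \le (2X^{(0)} + 4CT)\exp(4CT)$ uniformly in $k$, and substituting this bound back in the accumulated inequality controls the gradient norm. Since $X^{(0)} \le 2C_{\rm ini}^2$ by definition of $C_{\rm ini}$, we obtain \eqref{est-rm} with a constant $C_2$ depending only on $C_{\rm ini}$, $T$, $\mu$ and the data.

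The main obstacle is the coupling term $f_2(\bar u)\bar v$ in the $u$-equation: when tested against $\Pi_\disc u^{(n+1)}$ it produces a mixed product that is not sign-definite, and the $v$-equation provides no dissipation to bound $\Pi_{\disc'} v$ on its own. The trick is to handle both equations simultaneously, so that the mixed terms are absorbed into the common energy $X^{(n)}$ via Young's inequality and then closed by discrete Grönwall. No coercivity-type discrete Poincaré estimate is required, in agreement with the remark preceding Definition \ref{def:cons-rm}.
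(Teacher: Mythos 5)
Your proposal is correct and follows essentially the same route as the paper: test with $\varphi=\delta t^{(n+\frac{1}{2})}u^{(n+1)}$ and $\psi=\delta t^{(n+\frac{1}{2})}v^{(n+1)}$, use $(a-b)a\ge\tfrac12(a^2-b^2)$ to telescope, bound the reaction integrals via the affine growth of $f$ and $g$ together with Cauchy--Schwarz and Young, and close the coupled estimate before taking the supremum over time levels. The only difference is in the final bookkeeping: you invoke an explicit discrete Gr\"onwall lemma (which requires $\delta t_\disc$ small enough to absorb the implicit term $X^{(k)}$, a hypothesis not stated in the lemma itself but harmless in the convergence setting where $\delta t_{\disc_m}\to 0$), whereas the paper absorbs the quadratic right-hand-side contributions through its choice of $\varepsilon$ in Young's inequality and then "combines and takes the supremum"; your closure is the more standard and more carefully justified of the two.
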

\begin{proof}
Setting, as test functions in Scheme \eqref{rm-disc-pblm}, the functions $\varphi:=\delta t^{ (n+\frac{1}{2}) }u^{(n+1)}$ and $\psi:=\delta t^{ (n+\frac{1}{2}) }v^{(n+1)}$ leads to
\begin{equation*}
\begin{aligned}
\dsp\int_\O \Big(\Pi_\disc u^{(n+1)}(\x)&-\Pi_\disc u^{(n)}(\x)\Big) \Pi_\disc u^{(n+1)}(\x) \ud \x
+\mu\dsp\dsp\int_{t^{(n)}}^{t^{(n+1)}}\int_\O |\nabla_\disc u^{(n+1)}(\x)|^2 \ud \x\\
&\quad= \dsp\int_{t^{(n)}}^{t^{(n+1)}}\int_\O f_\disc^{(n+1)} \Pi_\disc u^{(n+1)}(\x)  \ud \x \ud t,
\end{aligned}
\end{equation*}
and
\begin{equation*}
\dsp\int_\O \Big(\Pi_{\disc^{'}} v^{(n+1)}(\x)-\Pi_{\disc^{'}} v^{(n)}(\x)\Big) \Pi_{\disc^{'}} v^{(n+1)}(\x) \ud \x
= \dsp\int_{t^{(n)}}^{t^{(n+1)}}\int_\O g_\disc^{(n+1)} \Pi_\disc v^{(n+1)}(\x)  \ud \x \ud t.
\end{equation*}
Apply the inequality $(y-z)y \geq \frac{1}{2}( |y|^2 -|z|^2 )$ to $y=\Pi_\disc u^{(n+1)}$ and $z=\Pi_\disc u^{(n)}$ (resp. $y=\Pi_{\disc^{'}} v^{(n+1)}$ and $z=\Pi_{\disc^{'}} v^{(n)}$) to obtain
\begin{equation*}
\begin{aligned}
\frac{1}{2}\dsp\int_\O \Big[ |\Pi_\disc u^{(n+1)}(\x)|^2-|\Pi_\disc u^{(n)}(\x)|^2 \Big] \ud \x
&+\mu\dsp\dsp\int_{t^{(n)}}^{t^{(n+1)}}\int_\O |\nabla_\disc u^{(n+1)}(\x)|^2 \ud \x\\
&\quad\leq \dsp\int_{t^{(n)}}^{t^{(n+1)}}\int_\O f_\disc^{(n+1)} \Pi_\disc u^{(n+1)}(\x)  \ud \x \ud t,
\end{aligned}
\end{equation*}
and
\begin{equation*}
\frac{1}{2}\dsp\int_\O \Big[ |\Pi_{\disc^{'}} v^{(n+1)}(\x)|^2-|\Pi_{\disc^{'}} v^{(n)}(\x)|^2 \Big] \ud \x
\leq \dsp\int_{t^{(n)}}^{t^{(n+1)}}\int_\O g_\disc^{(n+1)} \Pi_{\disc^{'}} v^{(n+1)}(\x)  \ud \x \ud t.
\end{equation*}
Sum on $n=0,...,m-1$, for some $m=0,...,N$:
\begin{equation*}
\begin{aligned}
\frac{1}{2}\dsp\int_\O \Big[ |\Pi_\disc u^{(m)}(\x)|^2-|\Pi_\disc u^{(0)}(\x)|^2 \Big] \ud \x
&+\mu\dsp\dsp\int_{0}^{t^{(m)}}\int_\O |\nabla_\disc u(\x)|^2 \ud \x\\
&\quad\leq \dsp\int_{0}^{t^{(m)}}\int_\O f_\disc^{(m)} \Pi_\disc u^{(m)}(\x)  \ud \x \ud t,
\end{aligned}
\end{equation*}
and
\begin{equation*}
\frac{1}{2}\dsp\int_\O \Big[ |\Pi_{\disc^{'}} v^{(m)}(\x)|^2-|\Pi_{\disc^{'}} v^{(0)}(\x)|^2 \Big] \ud \x
\leq \dsp\int_{0}^{t^{(m)}}\int_\O g_\disc^{(m)} \Pi_{\disc^{'}} v^{(m)}(\x)  \ud \x \ud t.
\end{equation*}
Applying the Cauchy--Schwarz inequality to the right--hand side of both inequalities leads to 
\begin{equation*}
\begin{aligned}
\frac{1}{2}\dsp\int_\O \Big[ |\Pi_\disc u^{(m)}(\x)|^2&-|\Pi_\disc u^{(0)}(\x)|^2 \Big] \ud \x
+\mu\dsp\dsp\int_{0}^{t^{(m)}}\int_\O |\nabla_\disc u(\x)|^2 \ud \x \ud t\\
&\quad\leq || f_\disc^{(m)} ||_{L^2(\O\times (0,T))} \; || \Pi_\disc u^{(m)} ||_{L^2(\O\times (0,T))},
\end{aligned}
\end{equation*}
and
\begin{equation*}
\frac{1}{2}\dsp\int_\O \Big[ |\Pi_{\disc^{'}} v^{(m)}(\x)|^2-|\Pi_{\disc^{'}} v^{(0)}(\x)|^2 \Big] \ud \x
\leq || g_\disc^{(m)} ||_{L^2(\O\times (0,T))} 
\; || \Pi_{\disc^{'}} v^{(m)} ||_{L^2(\O\times (0,T))}.
\end{equation*}
Thanks to the assumptions on $f$ and $g$ given in Assumptions \ref{assump-rm}, one writes
\begin{equation*}
\begin{aligned}
\frac{1}{2}\dsp\int_\O \Big[ |\Pi_\disc u^{(m)}(\x)|^2&-|\Pi_\disc u^{(0)}(\x)|^2 \Big] \ud \x
+\mu\dsp\int_{0}^{t^{(m)}}\int_\O |\nabla_\disc u(\x,t)|^2 \ud \x \ud t\\ 
&\leq c_1 || \Pi_\disc u^{(m)} ||_{L^2(\O\times (0,T))} + c_2 || \Pi_\disc u^{(m)} ||_{L^2(\O\times (0,T))}^2\\
&+ c_3 | \Pi_\disc u^{(m)} ||_{L^2(\O\times (0,T))} || \Pi_{\disc^{'}}  v^{(m)} ||_{L^2(\O\times (0,T))},
\end{aligned}
\end{equation*}
and
\begin{equation*}
\begin{aligned}
\frac{1}{2}\dsp\int_\O \Big[ |\Pi_{\disc^{'}} v^{(m)}(\x)|^2&-|\Pi_{\disc^{'}} v^{(0)}(\x)|^2 \Big] \ud \x\\
&\leq c_4 || \Pi_{\disc^{'}} v^{(m)} ||_{L^2(\O\times (0,T))} 
+c_6 || \Pi_{\disc^{'}} v^{(m)} ||_{L^2(\O\times (0,T))}^2\\ 
&+c_5 || \Pi_{\disc^{'}} v^{(m)} ||_{L^2(\O\times (0,T))} || \Pi_\disc u^{(m)} ||_{L^2(\O\times (0,T))}.
\end{aligned}
\end{equation*}
Apply the Young's inequality, with $\varepsilon$ satisfying $M_1+\frac{1}{2\varepsilon}-\frac{1}{2} >0$ and $M_2+\frac{1}{2\varepsilon}-\frac{1}{2} >0$, to the right--hand side of both inequalities to obtain
\begin{equation*}
\begin{aligned}
\frac{1}{2}\dsp\int_\O \Big[ |\Pi_\disc u^{(m)}(\x)|^2&-|\Pi_\disc u^{(0)}(\x)|^2 \Big] \ud \x
+\mu\dsp\dsp\int_{0}^{t^{(m)}}\int_\O |\nabla_\disc u(\x,t)|^2 \ud \x \ud t\\
&\quad\leq \frac{c_1^2}{2\varepsilon}+M_1 || \Pi_\disc u^{(m)} ||_{L^2(\O\times (0,T))}^2
+ \frac{1}{2\varepsilon} || \Pi_{\disc^{'}} v^{(m)} ||_{L^2(\O\times (0,T))}^2
\end{aligned}
\end{equation*}
and
\begin{equation*}
\begin{aligned}
\frac{1}{2}\dsp\int_\O \Big[ |\Pi_{\disc^{'}} v^{(m)}(\x)|^2&-|\Pi_{\disc^{'}} v^{(0)}(\x)|^2 \Big] \ud \x\\
&\quad\leq \frac{c_4^2}{2\varepsilon}+M_2 || \Pi_{\disc^{'}} v^{(m)} ||_{L^2(\O\times (0,T))}^2
+ \frac{1}{2\varepsilon} || \Pi_\disc u^{(m)} ||_{L^2(\O\times (0,T))}^2,
\end{aligned}
\end{equation*}
where $M_1:=\frac{\varepsilon}{2}+c_2+\frac{c_3^2}{2}$ and $M_2:=\frac{\varepsilon}{2}+c_6+\frac{1}{2\varepsilon}$. 

Combine the above inequalities together and take the supremum on $m=0,...,N$ establishes Estimate \eqref{est-rm}, since $\sup_{m=0,...,N}\int_\O |\Pi_\disc U^{(m)}|^2 \ud \x
=|| \Pi_\disc U^{(m)} ||_{L^\infty(0,T;L^2(\O))}^2
$ (resp. $\sup_{m=0,...,N}\int_\O |\Pi_{\disc^{'}} U^{(m)}|^2 \ud \x
=|| \Pi_{\disc^{'}} U^{(m)} ||_{L^\infty(0,T;L^2(\O))}^2
$).
\end{proof}


\begin{corollary}
Assume \eqref{assump-rm} and let $\disc$ be a gradient discretisation. Then there exists at least one solution $(u,v)$ to the gradient scheme \eqref{def-gs-rm}.
\end{corollary}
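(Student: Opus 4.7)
The plan is to argue by induction on $n$: assuming $(u^{(n)},v^{(n)})$ has been constructed, produce $(u^{(n+1)},v^{(n+1)})\in X_\disc\times Y_\disc$ satisfying \eqref{rm-disc-pblm1}--\eqref{rm-disc-pblm2}. Being a finite-dimensional nonlinear system, this reduces to finding a zero of a continuous map, which I will handle by a Brouwer-type argument. The first step exploits the affine structure $g(s,\xi)=g_1(s)+\alpha\xi$: for $\delta t^{(n+\frac12)}$ small enough that $1-\delta t^{(n+\frac12)}\alpha>0$ (automatic if $\alpha\le 0$), equation \eqref{rm-disc-pblm2} becomes, for all $\psi\in Y_\disc$,
\[
(1-\delta t^{(n+\frac12)}\alpha)\int_\O \Pi_{\disc^{'}}v^{(n+1)}\Pi_{\disc^{'}}\psi\ud\x
=\int_\O \Pi_{\disc^{'}}v^{(n)}\Pi_{\disc^{'}}\psi\ud\x
+\delta t^{(n+\frac12)}\int_\O g_1(\Pi_\disc u^{(n+1)})\Pi_{\disc^{'}}\psi\ud\x.
\]
Since $\|\Pi_{\disc^{'}}\cdot\|_{L^2(\O)}$ is a norm on the finite-dimensional space $Y_\disc$, the bilinear form on the left is an inner product, so this linear equation uniquely determines $v^{(n+1)}$ in terms of $u^{(n+1)}$, say $v^{(n+1)}=\mathcal{S}(u^{(n+1)})$, with $\mathcal{S}\colon X_\disc\to Y_\disc$ continuous and of at most linear growth (inherited from the bound on $g_1$).

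Substituting back into \eqref{rm-disc-pblm1} reduces the question to finding $u\in X_\disc$ with $G(u)=0$, where $G\colon X_\disc\to X_\disc$ is continuous after identifying $X_\disc$ with its dual through the inner product $(u,\varphi)\mapsto \int_\O\Pi_\disc u\,\Pi_\disc\varphi+\int_\O\nabla_\disc u\cdot\nabla_\disc\varphi$. I would then invoke the standard consequence of Brouwer's theorem: it suffices to exhibit $R>0$ such that $\langle G(u),u\rangle>0$ whenever $\|u\|_\disc=R$. Testing the scheme equation with $\varphi=u$ and applying $(a-b)a\ge\tfrac12(a^2-b^2)$ produces the lower bound
\[
\langle G(u),u\rangle\ge \tfrac{1}{2\delta t^{(n+\frac12)}}\|\Pi_\disc u\|_{L^2(\O)}^2
+\mu\|\nabla_\disc u\|_{L^2(\O)^d}^2
-\tfrac{1}{2\delta t^{(n+\frac12)}}\|\Pi_\disc u^{(n)}\|_{L^2(\O)}^2
-\int_\O f(\Pi_\disc u,\Pi_{\disc^{'}}\mathcal{S}(u))\Pi_\disc u\ud\x,
\]
and the growth bounds on $f_1,f_2$ together with the linear growth of $\mathcal{S}$ let Young's inequality control the last integral by $C_1+C_2\|\Pi_\disc u\|_{L^2(\O)}^2$ with constants independent of $u$.

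The main obstacle is then closing this estimate into strict positivity without restricting the time step. When $\delta t^{(n+\frac12)}$ is small (so that $\tfrac{1}{2\delta t^{(n+\frac12)}}>C_2$), coercivity of $\langle G(u),u\rangle$ on large spheres is immediate. For arbitrary time steps I would deploy a homotopy/topological degree argument: replacing $(f,g)$ by $(\lambda f,\lambda g)$ for $\lambda\in[0,1]$, the scheme at $\lambda=0$ is linear and coercive (unique solution, nonzero Brouwer degree), while the energy estimates underlying Lemma~\ref{lemma-est-rm} only invoke the polynomial growth bounds of Assumptions~\ref{assump-rm} and so hold uniformly in $\lambda\in[0,1]$. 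All zeros of the $\lambda$-scheme therefore lie in a single bounded ball, and homotopy invariance of the Brouwer degree transfers the nonzero degree from $\lambda=0$ to $\lambda=1$, yielding the required solution and completing the induction.
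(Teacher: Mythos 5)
Your route is genuinely different from the paper's: the paper freezes the nonlinearity at a given $w=(w_1,w_2)$, solves the resulting \emph{linear} square system to define a solution map $T(w)=(u,v)$, and then invokes Brouwer for $T$; you instead eliminate $v^{(n+1)}$ exactly through the affine structure of $g$, reduce to a single equation $G(u)=0$ on $X_\disc$, and use the ``$\langle G(u),u\rangle>0$ on a large sphere'' corollary of Brouwer. Up to and including the small-time-step case your argument is sound (granting $\mu>0$, which the paper also uses silently when it drops the gradient term in Lemma \ref{lemma-est-rm}), and it has the merit of making explicit a smallness condition that the paper's proof conceals: since $\|f(\Pi_\disc w_1,\Pi_{\disc^{'}}w_2)\|_{L^2(\O)}$ grows linearly in $w$, the paper never actually verifies that $T$ maps some ball into itself, and doing so would require the same kind of condition $\tfrac{1}{2\delta t^{(n+\frac12)}}>C_2$ that you isolate.

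The genuine gap is the final homotopy paragraph. Homotopy invariance of the degree requires a bound, uniform in $\lambda\in[0,1]$, on all zeros of $G_\lambda$, and you assert this follows from the energy estimates ``uniformly in $\lambda$.'' But for a single time step the estimate obtained by testing with $u$ reads
\begin{equation*}
\tfrac{1}{2\delta t^{(n+\frac12)}}\|\Pi_\disc u\|_{L^2(\O)}^2+\mu\|\nabla_\disc u\|_{L^2(\O)^d}^2
\le \tfrac{1}{2\delta t^{(n+\frac12)}}\|\Pi_\disc u^{(n)}\|_{L^2(\O)}^2 + C_1 + C_2\|\Pi_\disc u\|_{L^2(\O)}^2,
\end{equation*}
and precisely when $\tfrac{1}{2\delta t^{(n+\frac12)}}\le C_2$ the quadratic contribution of $|f_1(s)|\le c_1+c_2|s|$ can no longer be absorbed, so no a priori bound on the zeros follows and the degree argument cannot start. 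This is not repairable: take $f(s,\xi)=s$ (admissible with $c_1=0$, $c_2=1$, $f_2=0$), $g=0$, and any discretisation admitting $\varphi$ with $\Pi_\disc\varphi\equiv 1$ and $\nabla_\disc\varphi=0$ (HMM, conforming $\mathbb{P}_1$); testing \eqref{rm-disc-pblm1} with this $\varphi$ gives $\bigl(\tfrac{1}{\delta t^{(n+\frac12)}}-1\bigr)\int_\O\Pi_\disc u^{(n+1)}\ud\x=\tfrac{1}{\delta t^{(n+\frac12)}}\int_\O\Pi_\disc u^{(n)}\ud\x$, which has no solution when $\delta t^{(n+\frac12)}=1$ and $\int_\O\Pi_\disc u^{(n)}\ud\x\ne 0$. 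So existence for arbitrary time steps is simply false under Assumptions \ref{assump-rm}; the corollary needs a restriction of the type $\delta t_\disc<1/(2C_2)$ together with $1-\delta t_\disc\alpha>0$, which your small-step argument already delivers, and the last paragraph of your proposal should be replaced by that hypothesis rather than by a homotopy.
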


\begin{proof}
At each time step $n+1$, \eqref{rm-disc-pblm} provides square non linear equations on $u^{(n+1)}$ and $v^{(n+1)}$. For a given $w=(w_1,w_2) \in X_\disc \times Y_\disc$, $(u,v)\in X_\disc \times Y_\disc$ is is the solution to
\begin{equation}\label{rm-disc-pblm-lin}
\begin{aligned}
&\dsp\int_\O \Pi_{\disc}\dsp\frac{u^{(n+1)}-u^{(n)}}{\delta t^{(n+\frac{1}{2})}}(x) \Pi_\disc \varphi(\x)
+ \mu\dsp\int_\O \nabla_\disc u^{(n+1)}(\x) \cdot \nabla_\disc \varphi(\x)\ud \x\\
&\qquad=\dsp\int_\O f(\Pi_\disc w_1, \Pi_{\disc^{'}} w_2)\Pi_\disc \varphi(\x) \ud \x, \quad \forall \varphi \in X_\disc, \\
&\dsp\int_\O \Pi_{\disc^{'}}\dsp\frac{v^{(n+1)}-v^{(n)}}{\delta t^{(n+\frac{1}{2})}} \Pi_{\disc^{'}} \psi(\x) = \dsp\int_\O g(\Pi_\disc w_1, \Pi_{\disc^{'}} w_2)\Pi_\disc \psi(\x) \ud \x ,\quad \forall \psi \in Y_\disc. 
\end{aligned}
\end{equation}

This problem describes a linear square system, whose a right hand-side is built from the terms $\int_\O f(\Pi_\disc w_1,\Pi_{\disc^{'}}w_2)\Pi_\disc \varphi \ud \x$, $\int_\O \Pi_\disc u^{(n)}\Pi_\disc \varphi \ud \x$, $\int_\O g(\Pi_\disc w_1,\Pi_{\disc^{'}}w_2)\Pi_{\disc^{'}} \psi \ud \x$, and $\int_\O \Pi_{\disc^{'}} v^{(n)}\Pi_{\disc^{'}} \psi \ud \x$.

Using arguments similar to the proof of Lemma \eqref{lemma-est-rm-grad}, we can obtain
\begin{equation*}
\begin{aligned}
&|| \Pi_\disc u^{(n+1)} ||_{L^2(\O)} + || \nabla_\disc u^{(n+1)} ||_{L^2(\O)^d} \leq
C_4|| f ||_{L^2(\O)} + || \Pi_{\disc} u^{(n)}||_{L^2(\O)},\\
&|| \Pi_{\disc^{'}} v^{(n+1)} ||_{L^2(\O)} \leq
C_5|| g ||_{L^2(\O)} + || \Pi_{\disc^{'}} v^{(n)}||_{L^2(\O)}.
\end{aligned}
\end{equation*}
where $C_4$ and $C_5$ not depending on $u^{(n+1)}$ or $v^{(n+1)}$. Hence, the kernel of the matrix associated with the above linear square system is reduced to $\{0\}$, and the matrix is invertible. We then can define the mapping $T:X_\disc \times Y_\disc \to X_\disc \times Y_\disc$ by $T(w)=(u,v)$ with $(u,v)$ is the solution to \eqref{rm-disc-pblm-lin}. Since $T$ is continuous, Brouwer's fixed point establishes the existence of a solution $u^{(n+1)}$ to the system at time step $n + 1$. 
\end{proof}


\begin{lemma}[Estimates on the discrete gradient and time derivative]
\label{lemma-est-rm-grad}
Under Assumptions \eqref{assump-rm}, let $\disc$ be a gradient discretisation in the sense of Definition \ref{def-gd-rm} and let $(u,v)$ be a solution of the gradient scheme \eqref{rm-disc-pblm}. If $ || \nabla_\disc u^{(0)} ||_{L^2(\O)} $ is  bounded by $C_6$, then there exists a constant $C_7$ only depending on $C_2$ and $C_6$, such that
\begin{equation}\label{eq-est-grad}
||  \delta_\disc u||_{L^2(\O\times (0,T))}+ || \nabla_\disc u ||_{L^\infty(0,T;L^2(\O)^d)} \leq C_7
\end{equation}
\end{lemma}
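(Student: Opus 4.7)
The plan is to test the first equation of the gradient scheme \eqref{rm-disc-pblm1} with the specific choice $\varphi = u^{(n+1)} - u^{(n)}$, which by linearity of $\Pi_\disc$ satisfies $\Pi_\disc \varphi = \delta t^{(n+\frac{1}{2})}\, \delta_\disc^{(n+\frac{1}{2})} u$. With this substitution, the time--derivative term produces exactly
\[
\dsp\int_\O \delta_\disc^{(n+\frac{1}{2})} u\, \Pi_\disc\varphi \ud \x = \delta t^{(n+\frac{1}{2})} \dsp\int_\O |\delta_\disc^{(n+\frac{1}{2})} u|^2 \ud \x,
\]
which is precisely the quantity we aim to control in $L^2(\O\times(0,T))$. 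The diffusion term $\mu \int_\O \nabla_\disc u^{(n+1)} \cdot \nabla_\disc(u^{(n+1)}-u^{(n)}) \ud \x$ is handled by the same telescoping inequality $(y-z)y\geq \frac{1}{2}(|y|^2-|z|^2)$ used in the proof of Lemma \ref{lemma-est-rm}, now applied to the gradients, yielding
\[
\frac{\mu}{2}\dsp\int_\O \bigl(|\nabla_\disc u^{(n+1)}|^2 - |\nabla_\disc u^{(n)}|^2\bigr) \ud \x.
\]

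Next I would estimate the right--hand side $\delta t^{(n+\frac{1}{2})}\int_\O f_\disc^{(n+1)}\, \delta_\disc^{(n+\frac{1}{2})} u \ud \x$ by Cauchy--Schwarz followed by Young's inequality, splitting it as
\[
\frac{\delta t^{(n+\frac{1}{2})}}{2}\|f_\disc^{(n+1)}\|_{L^2(\O)}^2 + \frac{\delta t^{(n+\frac{1}{2})}}{2}\|\delta_\disc^{(n+\frac{1}{2})} u\|_{L^2(\O)}^2,
\]
so that half of the discrete time-derivative term can be absorbed into the left--hand side. Summing on $n=0,\dots,m-1$ for an arbitrary $m\in\{1,\dots,N\}$, one obtains the telescoped inequality
\[
\frac{1}{2}\dsp\int_0^{t^{(m)}}\!\!\dsp\int_\O |\delta_\disc u|^2 \ud \x \ud t + \frac{\mu}{2}\dsp\int_\O |\nabla_\disc u^{(m)}|^2 \ud \x \leq \frac{\mu}{2}\dsp\int_\O |\nabla_\disc u^{(0)}|^2 \ud \x + \dsp\int_0^{t^{(m)}}\!\!\dsp\int_\O |f_\disc|^2 \ud \x \ud t.
\]

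To close the estimate I would invoke the growth assumption on $f$ from Assumptions \ref{assump-rm}, which gives $|f(s,\xi)|\leq c_1 + c_2|s| + c_3|\xi|$, and therefore
\[
\dsp\int_0^T\!\!\dsp\int_\O |f_\disc|^2 \ud \x \ud t \leq C\bigl(1 + \|\Pi_\disc u\|_{L^2(\O\times(0,T))}^2 + \|\Pi_{\disc^{'}} v\|_{L^2(\O\times(0,T))}^2\bigr).
\]
The right--hand side is bounded by Lemma \ref{lemma-est-rm}, while $\|\nabla_\disc u^{(0)}\|_{L^2(\O)^d}$ is bounded by $C_6$ by hypothesis. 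Taking the supremum over $m\in\{0,\dots,N\}$ then gives the desired bound \eqref{eq-est-grad} with a constant $C_7$ depending only on $C_2$, $C_6$, $\mu$, $T$ and the structural constants $c_i$.

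The main technical point, and the step I expect to require the most care, is the absorption argument: the Young splitting of the reaction term has to leave a strictly positive fraction of $\|\delta_\disc u\|_{L^2}^2$ on the left-hand side, which is why $\varphi = u^{(n+1)}-u^{(n)}$ (rather than $\varphi = u^{(n+1)}$) must be used as the test function and why the $\mu\geq 0$ coefficient on the gradient term must be treated via the telescoping identity rather than discarded. Apart from that, the argument parallels the proof of Lemma \ref{lemma-est-rm}, and the dependence on the second unknown $v$ is harmless because $f$ is affine in $\xi$ and $\|\Pi_{\disc^{'}} v\|_{L^\infty(0,T;L^2(\O))}$ is already controlled by $C_2$.
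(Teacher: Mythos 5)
Your proposal is correct and follows essentially the same route as the paper's proof: testing with $\varphi=u^{(n+1)}-u^{(n)}$, telescoping the gradient term via $(r-q)\cdot r\geq \tfrac12|r|^2-\tfrac12|q|^2$, absorbing half of the $\|\delta_\disc u\|_{L^2}^2$ term after Cauchy--Schwarz and Young, bounding $f_\disc$ through the growth assumptions and Lemma \ref{lemma-est-rm}, and taking the supremum over $m$. No substantive differences.
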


\begin{proof}
Setting, as a test function in Scheme \eqref{rm-disc-pblm}, the function $ \varphi=u^{(n+1)}-u^{(n)}$ leads to
\begin{equation*}
\begin{aligned}
\delta t^{(n+\frac{1}{2})}\dsp\int_\O |\delta_\disc^{(n+\frac{1}{2})} u(\x) |^2 \ud \x
&+\mu\dsp\int_\O \nabla_\disc u^{(n+1)}(\x) \cdot \nabla_\disc(u^{(n+1)}(\x)-u^{(n)}(\x)) \ud \x \\
&\quad= \dsp\int_{t^{(n)}}^{t^{(n+1)}}\int_\O f_\disc^{(n+1)}\delta_\disc^{(n+\frac{1}{2})} u(\x)  \ud \x \ud t.
\end{aligned}
\end{equation*}
Applying the relation $(r - q) \cdot r \geq  \frac{1}{2} |r|^2 - \frac{1}{2} |q|^2$ to the second term on the left hand side yields
\begin{equation*}
\begin{aligned}
\delta t^{(n+\frac{1}{2})}\dsp\int_\O |\delta_\disc^{(n+\frac{1}{2})} u(\x) |^2 \ud \x
&+\frac{\mu}{2}\dsp\int_\O |\nabla_\disc u^{(n+1)}(\x)|^2 \ud \x 
-\frac{\mu}{2}\dsp\int_\O |\nabla_\disc u^{(n)}(\x)|^2 \ud \x \\
&\quad\leq
\dsp\int_{t^{(n)}}^{t^{(n+1)}}\int_\O f_\disc^{(n+1)} \delta_\disc^{(n+\frac{1}{2})} u(\x)  \ud \x \ud t.
\end{aligned}
\end{equation*}
Sum this inequality on $n=0,...,m-1$, for some $m=0,...,N$ to get
\begin{equation*}
\begin{aligned}
\dsp\int_0^{t^{(m)}}\dsp\int_\O |\delta_\disc u(t) |^2 \ud \x \ud t
&+\frac{\mu}{2}\dsp\int_\O |\nabla_\disc u^{(m)}(\x)|^2 \ud \x 
-\frac{\mu}{2}\dsp\int_\O |\nabla_\disc u^{(0)}|^2 \ud \x \\
&\quad\leq
\dsp\int_0^{t^{(m)}}\int_\O f_\disc^{(m)} \delta_\disc u(t)  \ud \x \ud t.
\end{aligned}
\end{equation*}
Thanks to Assumptions \ref{assump-rm}, Cauchy--Schwars inequality and Young's inequalities, we have
\begin{equation*}
\begin{aligned}
\dsp\int_0^{t^{(m)}}\dsp\int_\O |\delta_\disc u(t) |^2 \ud \x \ud t
&+\frac{\mu}{2}\dsp\int_\O |\nabla_\disc u^{(m)}(\x)|^2 \ud \x 
 \\
&\quad\leq
\frac{1}{2}\Big( c_1+c_2 || \Pi_\disc u ||_{L^2(\O\times (0,t^{(m)}))} + c_3 || \Pi_\disc v ||_{L^2(\O\times (0,t^{(m)}))} \Big)^2\\
&\qquad+\frac{1}{2}|| \delta_\disc u ||_{L^2(\O\times (0,t^{(m)}))}^2
+\frac{\mu}{2}C_6.
\end{aligned}
\end{equation*}
The proof is concluded by taking the supremum on $m=0,...,N$, using the inequality $\sup_n(a_n+b_n) \leq \sup_n(a_n)+\sup{_n}(b_n)$ and invoking Lemma \eqref{lemma-est-rm} to estimate the second and third terms on the right hand side.   
\end{proof}

\subsection*{Proof of Theorem \ref{theorem-conver-rm}}

The proof follows the compactness technique detailed in \cite{44} and it is split into three steps:
\vskip 1pc
\noindent {\bf Step 1:} Compactness results.\\
Due to Estimate \eqref{est-rm} and to the consistency and the limit--conformity of GD, \cite[Lemma 4.8]{30} provides $\bar u \in L^2(0,T;H^1(\O))$ and $\bar v \in L^2(0,T;L^2(\O))$, such that, up to a subsequence, $\Pi_{\disc_m}u_m \to \bar u$ weakly in $L^2(0,T,L^2(\O))$, $\Pi_{\disc_m^{'}}v_m \to \bar v$ weakly in $L^2(0,T,L^2(\O))$, and $\nabla_{\disc_m}u_m \to \nabla\bar u$ weakly in $L^2(0,T,L^2(\O)^d)$, as $m\to \infty$. Since $ || \nabla_\disc u^{(0)} ||_{L^2(\O)}$ is assumed to bounded, Estimate \eqref{eq-est-grad} holds. This estimate with the three properties ( consistency, limit--conformity and compactness) show that the assumptions of \cite[Theorem 4.18]{30} are satisfied. This theorem proves that $\Pi_{\disc_m}u_m$ converges strongly to $\bar u$ in $L^\infty(0,T;L^2(\O))$, as $m\to \infty$. In  fact, $\partial_t \bar u \in L^2(0,T;L^2(\O)) $ and $\delta_{\disc_m}u_m$ converges weakly to $\partial_t\bar u$ in $L^2(0,T;L^2(\O))$, as $m\to \infty$. Estimates \eqref{est-rm} yields the weak convergence of $\Pi_{\disc_m}v_m$ to $\bar v$ in $L^\infty(0,T;L^2(\O))$, as $m\to \infty$.  

\vskip 1pc
\noindent {\bf Step 2:} $(\bar u,\bar v)$ is a solution to the continuous problem.\\
The a.e. convergence of $\Pi_{\disc_m}u_m$, the assumptions on $f_1$ and $f_2$ and the dominated convergence theorem show that $f_1(\Pi_{\disc_m}u_m) \to f_1(\bar u)$ and $f_2(\Pi_{\disc_m}u_m) \to f_2(\bar u)$ in $L^2(\O\times(0,T))$. Consider $\bar\varphi \in L^2(0,T;H^1(\O))$. \cite[Lemma 4.10]{30} shows the existence of $w_m=(w_m^{(n)})_{n=0,...,N_m} \in X_{\disc_m}^{N_m+1}$, such that $\Pi_{\disc_m}w_m \to \bar\varphi$ in $L^2(0,T;L^2(\O))$ and $\nabla_{\disc_m}w_m \to \nabla\bar\varphi$ in $L^2(0,T;L^2(\O)^d)$. Take $\varphi=\delta t_m^{(n+\frac{1}{2})}w_m^{(n)}$ as a test function in \eqref{rm-disc-pblm1}, sum on $n=0,...,N_m-1$, and pass to the limit $m\to \infty$ to see that $(\bar u,\bar v)$ satisfies \eqref{rm-weak1}, thanks to the strong--weak convergence of $\Pi_{\disc_m}u_m$, $\Pi_{\disc_m^{,}}v_m$ and $\delta_{\disc_m}u_m$.
\par Let us now verify \eqref{rm-weak2}. Take $\bar\psi \in L^2(0,T;L^2(\O))$ such that $\partial_t \bar \psi \in L^2(\O\times(0,T))$ and $\bar\psi(T,\cdot)=0$. By \cite[Lemma 4.10]{30} (with a slight change), we can find $w_m=(w_m^{(n)})_{n=0,...,N_m} \in Y_{\disc_m}^{N_m+1}$, such that $\Pi_{\disc_m^{'}}w_m \to \bar\psi$ in $L^2(0,T;L^2(\O))$ and $\delta_{\disc_m^{'}}w_m \to \partial_t \bar\psi$ strongly in $L^2(\O\times(0,T))$. Set $\psi=\delta t_m ^{(n+\frac{1}{2})}w_m^{(n)}$ as a test function in \eqref{rm-disc-pblm2} and sum on $n=0,...,N_m-1$ to obtain
\begin{equation}\label{eq-part1}
\begin{aligned}
\dsp\sum_{n=0}^{N_m-1}\dsp\int_\O \left( \Pi_{\disc_m^{'}}v_m^{(n+1)}(\x)-\Pi_{\disc_m^{'}}v^{(n)}(\x) \right)&\Pi_{\disc_m^{'}}w_m^{(n)}(\x) \ud x\\
&=\dsp\int_0^T\dsp\int_\O g_{\disc_m}^{(n+1)} \Pi_{\disc_m^{'}}w_m^{(n)}(\x) \ud x \ud t.
\end{aligned}
\end{equation}
For two families of real numbers $(a_n)_{n=0,...,\NN}$ and $(b_n)_{n=0,...,N}$, the form of discrete integration by part \cite[Eq. (D.15)]{30} is
\[
\dsp\sum_{n=0}^{N-1}\Big(b_{n+1}-b_n\Big)a_n=b_N a_N
-b_0 a_0-\dsp\sum_{n=0}^{N-1}b_{n+1}\Big(a_{n+1}-a_n\Big).
\]
Apply this relation to the right hand side in \eqref{eq-part1} and use the fact $w^{(N)}=0$ to deduce 
\begin{equation*}
\begin{aligned}
-\dsp\int_0^T \dsp\int_\O \Pi_{\disc_m^{'}}v_m(\x,t)\delta_{\disc_m^{'}}w_m(\x,t) \ud \x \ud t 
&-\int_\O \Pi_{\disc_m^{'}}v_m^{(0)}(\x)\Pi_{\disc_m^{'}}w_m^{(0)}(\x) \ud \x\\
&\quad=\dsp\int_0^T\dsp\int_\O g_{\disc_m}^{(n+1)} \Pi_{\disc_m^{'}}w_m^{(n)}(\x) \ud x \ud t.
\end{aligned}
\end{equation*}
By the space--time consistency, $\Pi_{\disc_m^{'}}v_m^{(0)}=\Pi_{\disc_m^{'}}J_{\disc_m^{'}}v_{\rm ini} \to v_{\rm ini}$ in $L^2(\O)$. The strong convergence of $\Pi_{\disc_m}u_m$, the assumptions on $g_1$ and the dominated convergence theorem lead to $g_1(\Pi_{\disc_m}u_m) \to g_1(\bar u)$ in $L^2(\O\times(0,T))$. Hence, passing to the limit $m \to \infty$ in the above equation implies that $\bar v$ satisfies \eqref{rm-disc-pblm2}.
\vskip 1pc
\noindent {\bf Step 3:} $\nabla_{\disc_m}u_m$ converges strongly.\\
Taking $\bar\varphi:=u_m$ in \eqref{rm-disc-pblm1} and passing to the superior limit gives (choose $\varphi = \bar u$ in \eqref{rm-weak1})
\begin{equation}\label{eq-proof-grad}
\begin{aligned}
\dsp\lim\sup_{m\to \infty}\dsp\int_0^T\dsp\int_\O \nabla_{\disc_m}&u_m(\x,t)\cdot \nabla_{\disc_m}u_m(\x,t) \ud x \ud t\\
&=\dsp\int_0^T\dsp\int_\O f(\bar u,\bar v)\bar u(\x,t) \ud \x \ud t
-\dsp\int_0^T\dsp\int_\O \partial_t \bar u(\x,t) \bar u(\x,t) \ud \x \ud t\\
&=\dsp\int_0^T\dsp\int_\O \nabla\bar u(\x,t)\cdot \nabla\bar u(\x,t)\ud \x \ud t.
\end{aligned}
\end{equation}
One can write
\begin{equation*}
\begin{aligned}
&\dsp\int_0^T\dsp\int_\O \left(\nabla_{\disc_m}u_m(\x,t)- \nabla\bar u(\x,t)\right)\cdot(\nabla_{\disc_m}u_m(\x,t)- \nabla\bar u(\x,t)) \ud \x \ud t\\
&\quad=\dsp\int_0^T\dsp\int_\O \nabla_{\disc_m}u_m(\x,t) \cdot \nabla_{\disc_m}u_m(\x,t) \ud \x \ud t
-\dsp\int_0^T\dsp\int_\O \nabla_{\disc_m}u_m(\x,t) \cdot \nabla\bar u(\x,t) \ud \x \ud t\\
&\qquad-\dsp\int_0^T\dsp\int_\O \nabla\bar u(\x,t) \cdot (\nabla_{\disc_m}u_m(\x,t)-\nabla\bar u(\x,t)) \ud \x \ud t.
\end{aligned}
\end{equation*}
Thanks to the weak convergence of $\nabla_{\disc_m} u_m$ and \eqref{eq-proof-grad}, passing to the limit $m \to \infty$ in each of the terms above completes the proof.

\section{Numerical results}\label{sec-numerical}
We examine here the validity of the HMM method in solving the reaction-diffusion models by exploring the dynamics of the spiral waves of the Barkley model. This model is considered as an example of the reaction-diffusion model \eqref{rm-strong1}--\eqref{rm-strong5} with reaction terms taking the form \cite{10}:
\begin{equation}\label{eqBK}
\begin{aligned}
 f(\bar u,\bar v)&=\frac{1}{\rho}\bar u(1-\bar u)(\bar u-\frac{\bar v+b}{a}), \\
 g(\bar u,\bar v)&=\bar u-\bar v,
\end{aligned}
\end{equation}
for $a,b>0$. The small parameter $0<\rho\ll1$ represents the time scale separation of fast variable $\bar u$ and slow variable $\bar v$.
 Before proceeding to do that, we first review the dynamics of the underlying model to understand the mechanism of nucleation and propagation of spiral waves in excitable media.


\subsection{Dynamics of the Barkley model}
A pair of essential features are shared by all reaction-diffusion models: spatially localized excitation that undergoes diffusion in space and quickly reverts to a recovery state. Precisely, if the stimulus is strong enough, an excitable system will switch from a state of rest to a state of excitement, then quickly revert to a refractory state and back to rest. A new excitement cannot be created until a certain period (refractory time) has gone by. So an excitable system has the ability to support the wave propagation caused by strong disturbances when in a rest state driven by local nonlinearity coupled with diffusion \cite{9,10}. Figure 1 \cite{43} shows the dynamics of the Barkley model or reaction kinetics when diffusion is not present, with nullclines pictures of $\bar u$ and $\bar v$. The $\bar u$-nullclines ($f(\bar u,\bar v)=0$) are represented by three straight lines: $\bar u=0, \bar u=1, \bar u=\frac{\bar v+b}{a}$, whereas $\bar v$-nullclines ($g(\bar u,\bar v)=0$) is the line $\bar u=\bar v$, where $a,b >0$. Excitable media systems are characterized by dynamic states of excitation and recovery. Thus, by setting a small boundary, say $\delta$, bordering the line $\bar u=0$, a given point $(\bar u,\bar v)$ is said to be excited if $\bar u>\delta$ and recovering otherwise.
\begin{center}
\begin{figure}[!h]
\includegraphics[width=0.80\linewidth]{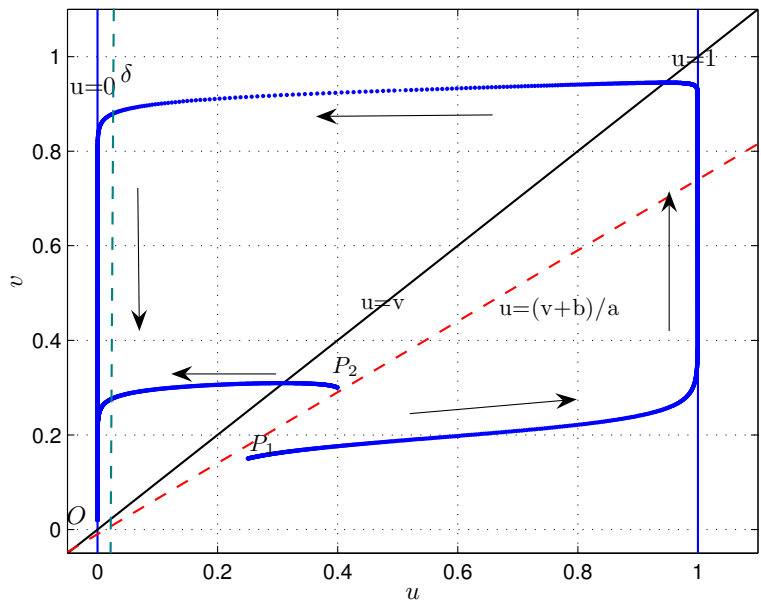}
\caption{Illustration of local dynamics of Barkley model in absence of diffusion ($\mu=0$) taken from \cite{43}. 
}
\label{fig:1}
\end{figure}
\end{center}
 The detail of local dynamics is given by the physical parameters $\rho$, $a$ and $b$. $\rho$ is selected very small, so that the dynamics of the activator $\bar u$ is much faster than the inhibitor $\bar v$ inside the region of excitement. However, $\bar u\approx0$ within recovery region and therefore the exponentially decay of the inhibitor $\bar v$ affects only the local dynamics \cite{10}. 

Increasing $a$ would cause a lengthening of the excitation period and increasing $\frac{b}{a}$ would raise the excitation threshold \cite{25}. Intersection of all nullclines yields the fixed points $(0,0)$ and $(1,1)$. The origin $(0,0)$ is stable and excitable fixed point of the model with excitation threshold $u_{th}=\frac{\bar v+b}{a}$.

To be precise, the initial conditions found on the left of the threshold  $u_{th}$ near the $(0,0)$ decay straight to the fixed point of origin. Conversely, the initial conditions found on the right of $u_{th}$ take a large excursion prior to shrinking down to $(0,0)$, see \cite{10} for more details. When spatial diffusion is added to the reaction kinetics we gain spiral wave dynamics \cite{25}. This allows us to examine the ways in which spiral waves are created, their movement, the way they collapse and disappear, and this offers us insights into the appearance, modification, and ways of stopping arrhythmias.

\subsection{Spiral waves}
\par The wave fronts and backs of spiral waves meet to form a tip which pivots around the region named the core. If the core of the spiral wave moves then it is regarded as being drifting. In order to examine the ways in which spiral waves behave, employing the basis of the HMM method, we consider a pair of examples for the Barkley model at the edge of the simulation domains that have no-flux boundary conditions. In both cases, the HMM scheme is tasted on a square domain $\Omega=[-L,L]^2$, which dividing into a uniform mesh of $3584$ triangular elements.

In all test cases, we consider the Barkley model defined in the introduction of this section \eqref{eqBK}. Letting $\x=(x_1,x_2)$, the initial conditions, required to initiate the spiral waves, are chosen in general form by
\begin{align}
 \bar u_0(x_1,x_2)&=(1+\exp(4(|x_1|-\alpha_1)))^{-2} -(1+\exp(4(|x_1|-\alpha_2)))^{-2},\nonumber \\
 \bar v_0(x_1,x_2)&=\alpha_3,\label{eq2}
\end{align}
where $\alpha_1$,$\alpha_2$ and $\alpha_3$ are real constants. For the time--discretisation, the forward Euler method is considered with time step $\delta t >0$. 
\subsection*{Example 1}\label{test1}
For the first example, the parameters of the model are chosen according to \cite{16} as $\rho=0.005$, $a=0.3$, $b=0.01$, $\mu=1$ and $L=30$. The initial conditions are given as
\[
u_{\rm ini}(x_1,x_2)=\begin{cases}
\hfill 0&, \quad \mbox{for } x_1<0 \mbox{ or }  x_2>5,\\
\bar u_0&, \quad \mbox{otherwise},
\end{cases}
\]
and
\[
v_{\rm ini}(x_1,x_2)=\begin{cases}
\hfill \bar v_0&, \quad \mbox{for } x_1<1 \mbox{ and }  x_2<10,\\
0&, \quad \mbox{otherwise},
\end{cases}
\]
where $\bar u_0$ and $\bar v_0$ are defined in Equation \eqref{eq2}, by setting $\alpha_1=5$, $\alpha_2=1$ and $\alpha_3=0.1$.

\begin{figure}[ht]
	\begin{center}
	\begin{tabular}{ccc}
\includegraphics[width=0.30\linewidth]{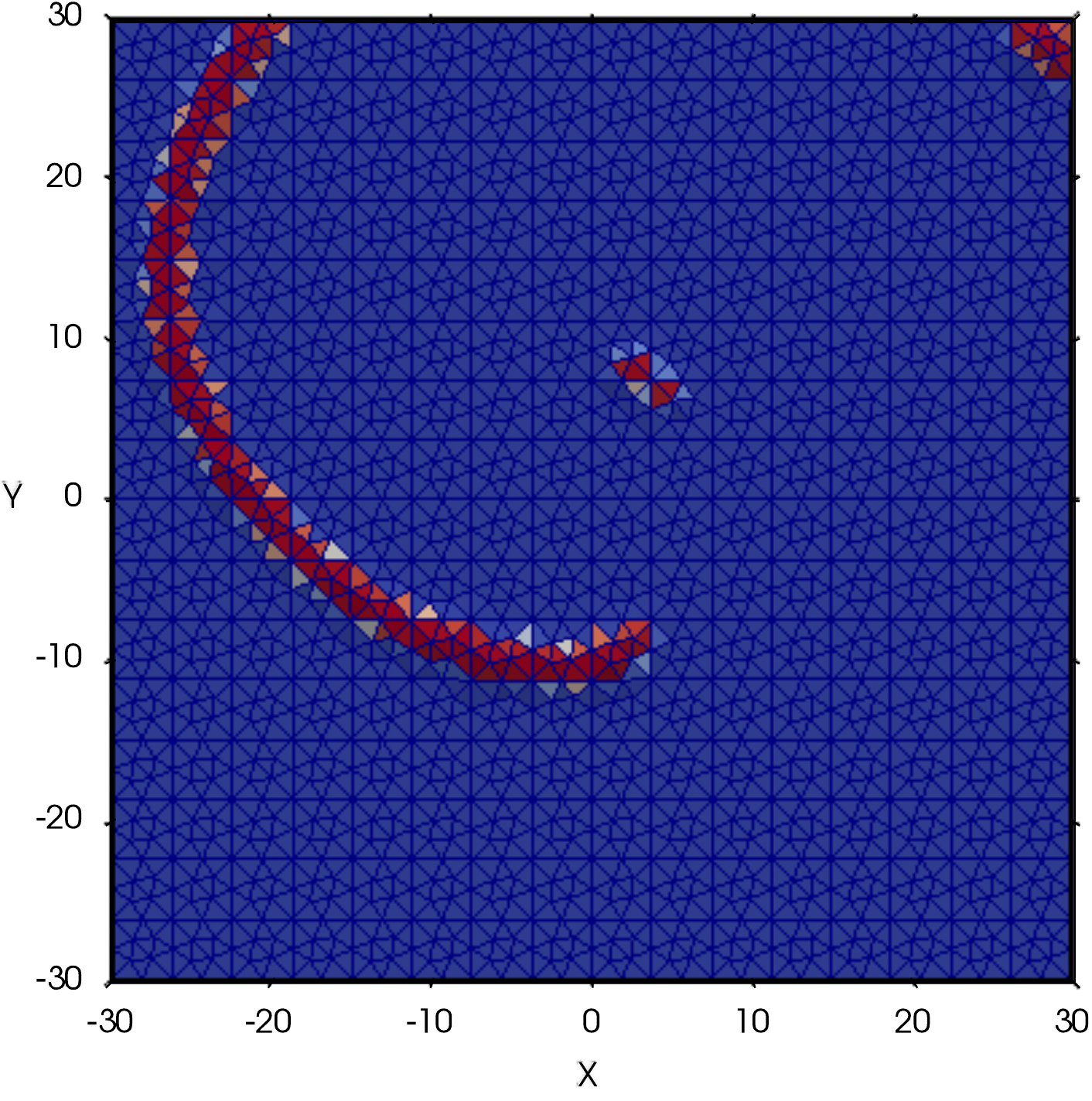} & \includegraphics[width=0.30\linewidth]{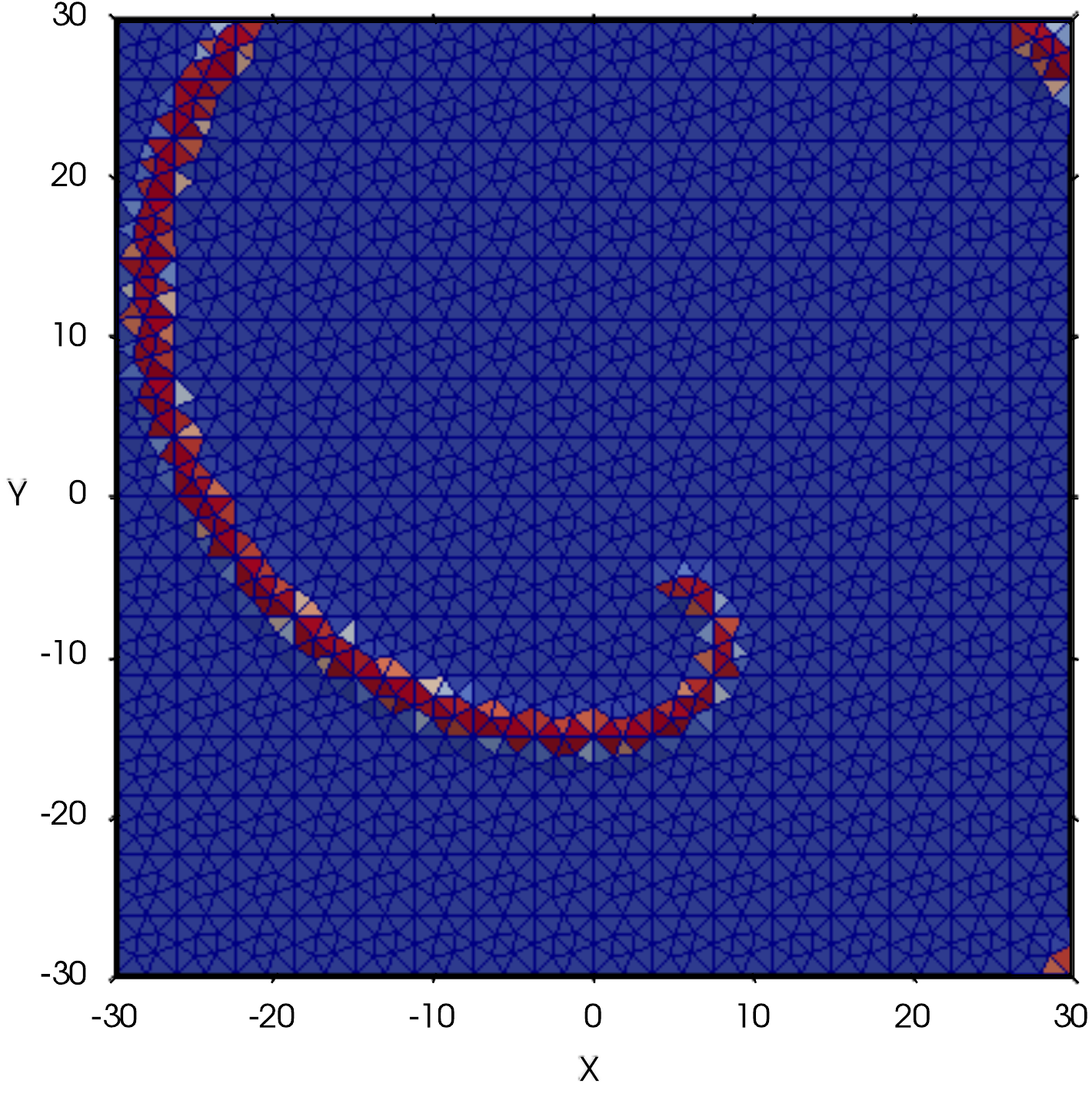} &
\includegraphics[width=0.30\linewidth]{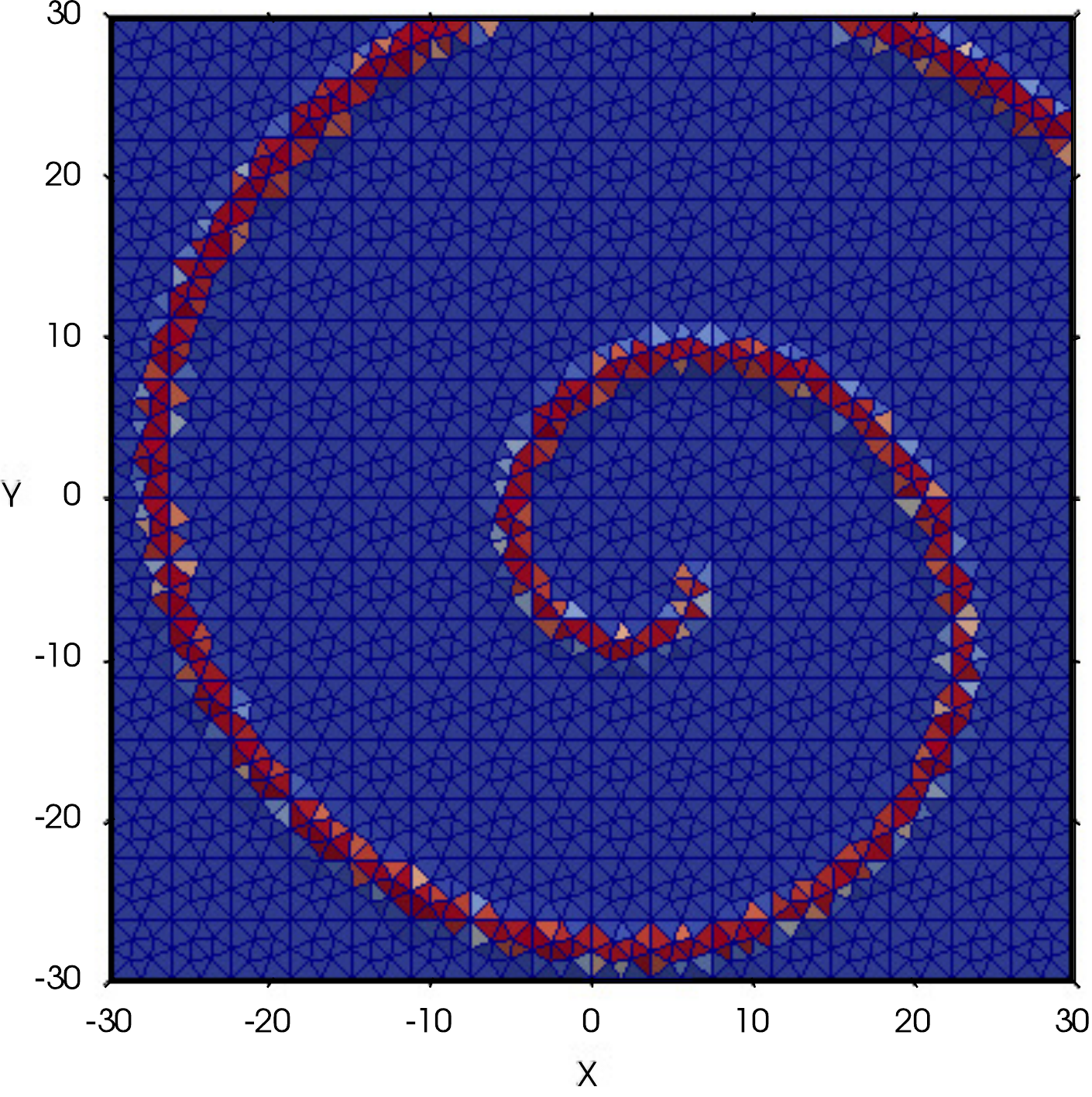}\\
\texttt{Time:} $t=5$ & \texttt{Time:} $t=10$ & \texttt{Time:} $t=20$\\
\includegraphics[width=0.30\linewidth]{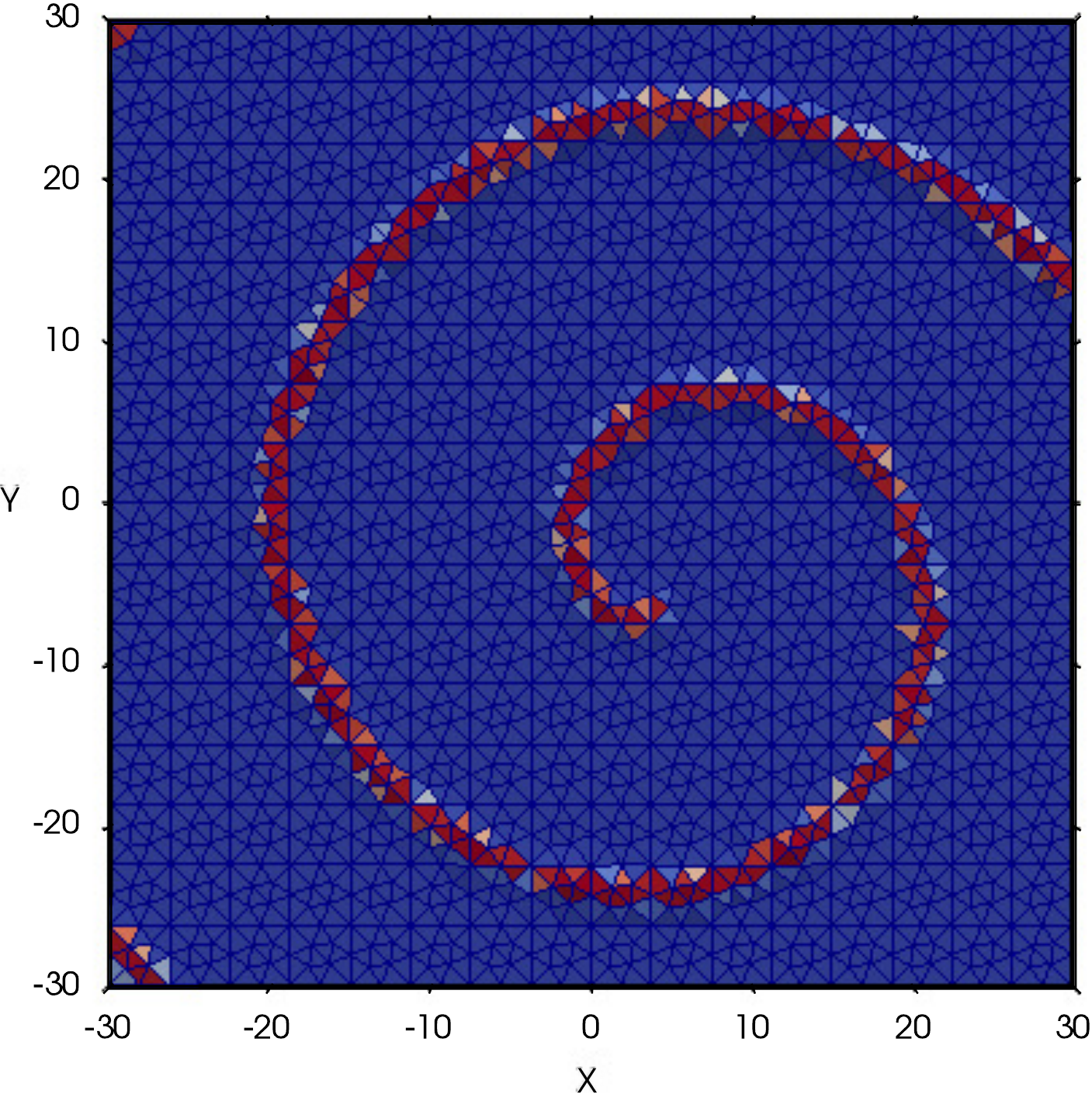} &
\includegraphics[width=0.30\linewidth]{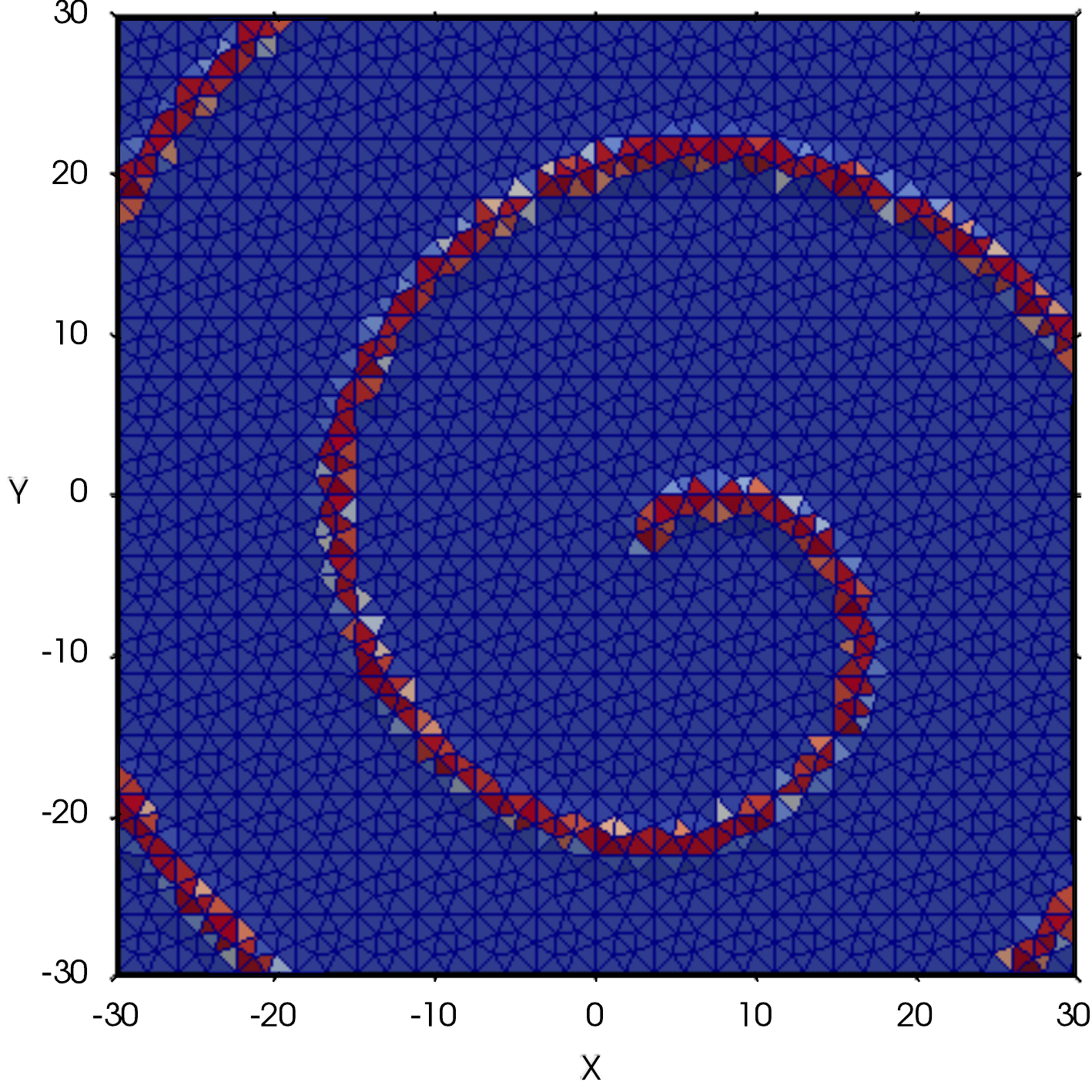} &
\includegraphics[width=0.30\linewidth]{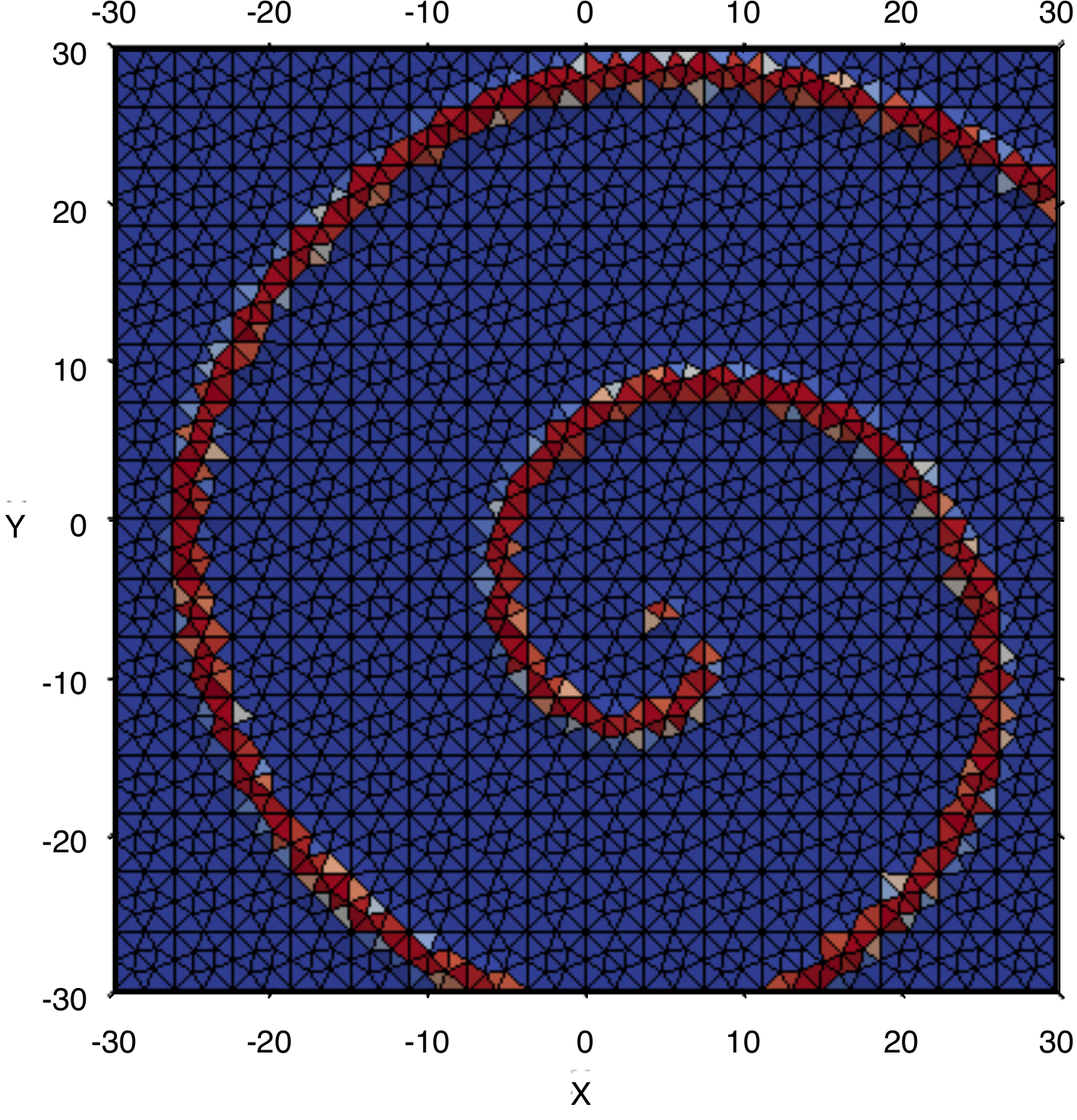}\\
\texttt{Time:} $t=30$ & \texttt{Time:} $t=40$ & \texttt{Time:} $t=60$\\
\includegraphics[width=0.30\linewidth]{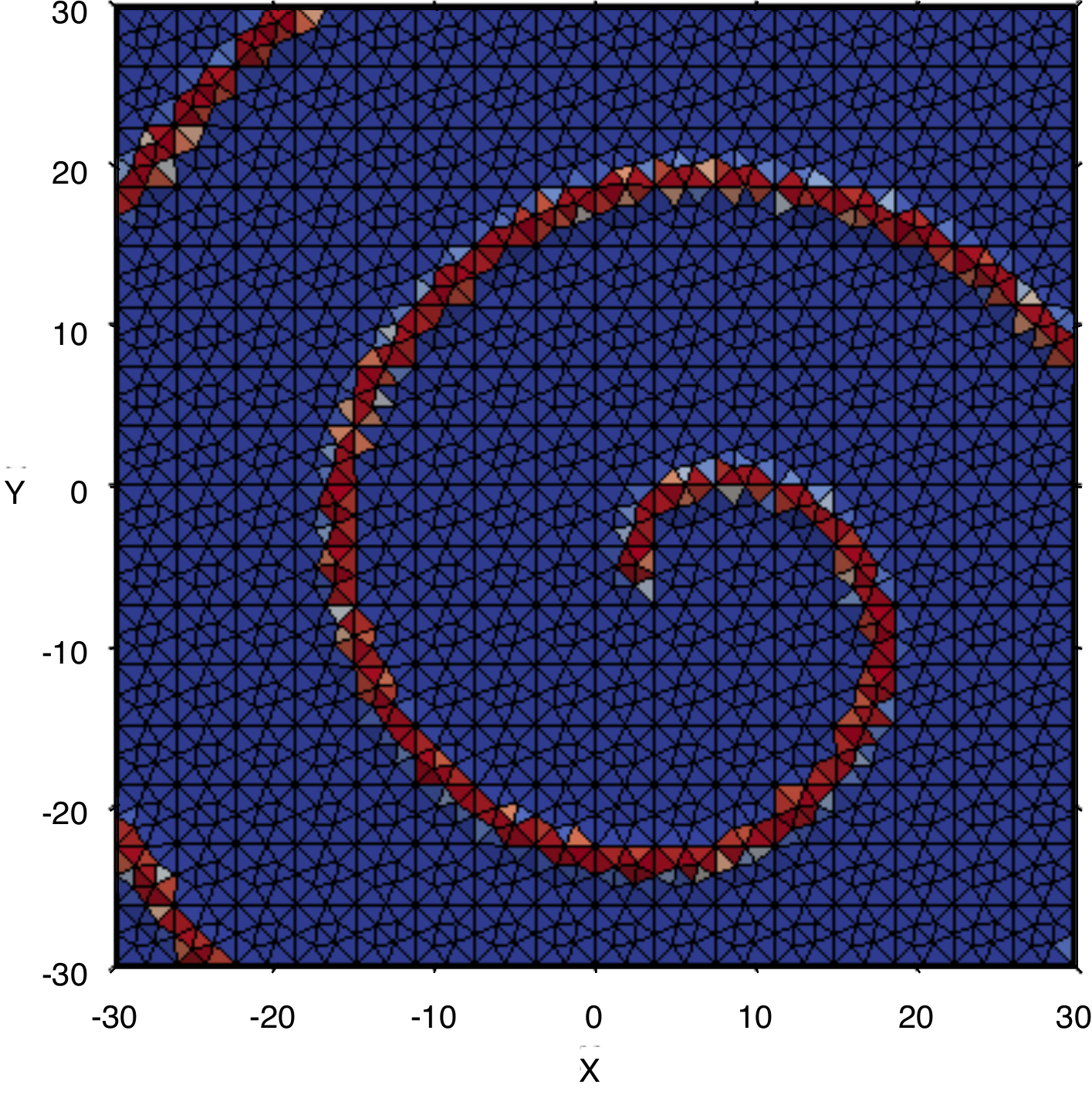} &
\includegraphics[width=0.30\linewidth]{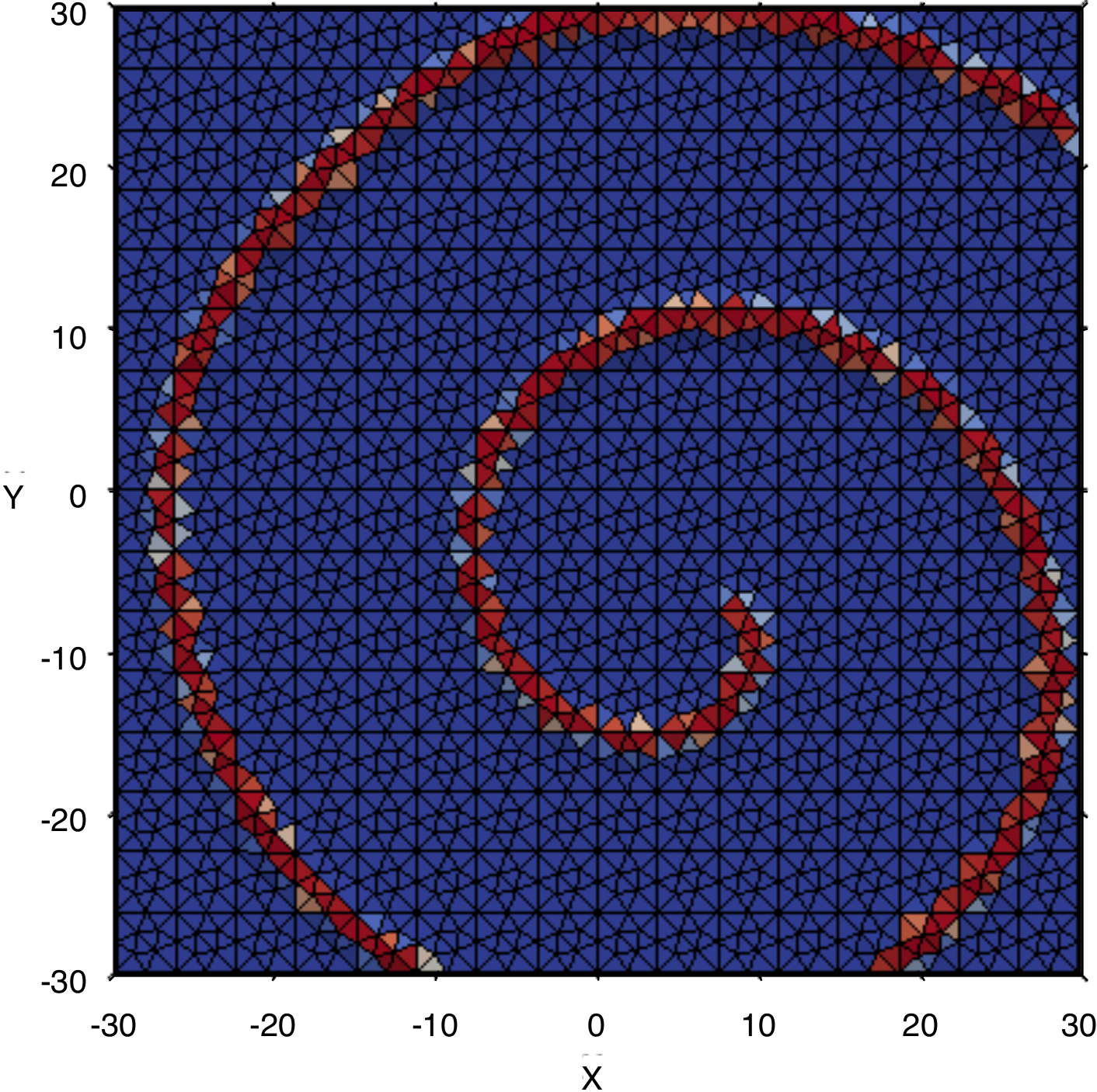} &\\
\texttt{Time:} $t=80$ & \texttt{Time:} $t=100$ &\\
\end{tabular}
\end{center}
\caption{Example 1, propagation of spiral waves of the Barkley model at different time levels with parameters detailed in text.}
\label{fig-t1}
\end{figure}

\par Figure \ref{fig-t1} shows the way that spiral waves were propagated in this instance, plotted at different times, with the simulation is done up to $t=100$. It can be seen that the development of the spiral wave involves a core at the medium's center, and this develops into a periodically rotating single spiral wave. The trajectory of this spiral wave runs symmetrically from origin to boundary, moving along the length to turn into stationary spirals. The observed behavior is called the reflection of the spiral at the boundary.


\subsection*{Example 2}\label{test2}
\par However, in certain instances, the spiral tip may collide with the boundary and then the spiral waves are annihilated instead of being reflected, especially if the initial displacement of the tip is close to the boundary as can be seen in the second example. In this case, we choose the parameters of the model as \cite{16}: $\rho=0.0208$, $a=0.52$, $b=0.05$ and $\mu=1$ on the square domain, which is of the length $L=7.5$. The initial conditions are given as
\[
u_{\rm ini}(x_1,x_2)=\begin{cases}
\hfill 0&, \quad \mbox{for } x_1<0 \mbox{ or }  x_2>5,\\
\bar u_0&, \quad \mbox{otherwise},
\end{cases}
\]
and
\[
v_{\rm ini}(x_1,x_2)=\begin{cases}
\hfill \bar v_0&, \quad \mbox{for } x_1<-1 \mbox{ and }  x_2<3,\\
0&, \quad \mbox{otherwise},
\end{cases}
\]
where $\bar u_0$ and $\bar v_0$ are as in Equation \eqref{eq2}, by putting $\alpha_1=3$, $\alpha_2=1$ and $\alpha_3=0.25$. Spiral solution for the Barkley model under the considered parameters is shown in Figure \ref{fig-t2} at different time levels, from $t=0.2$ to $t=3$. In this instance, with a small domain, the spiral commences taking shape at $t=1$, although the rotation is not yet complete. Following this, each part of the spiral will drift over to the nearest boundary and be absorbed. 

A dormant state will be achieved in both instances as when there are interactions between spiral waves and boundaries within excitable media, the trajectory of the wave is either reflected or destroyed by the interaction with the boundary, according to the parameters that have been chosen. We conclude that our numerical observations for the Barkley model obtained using HMM method are consistent with those observations in the previous studies \cite{16,18} in terms of spiral waves dynamics.

\begin{figure}[ht]
	\begin{center}
	\begin{tabular}{ccc}
\includegraphics[width=0.30\linewidth]{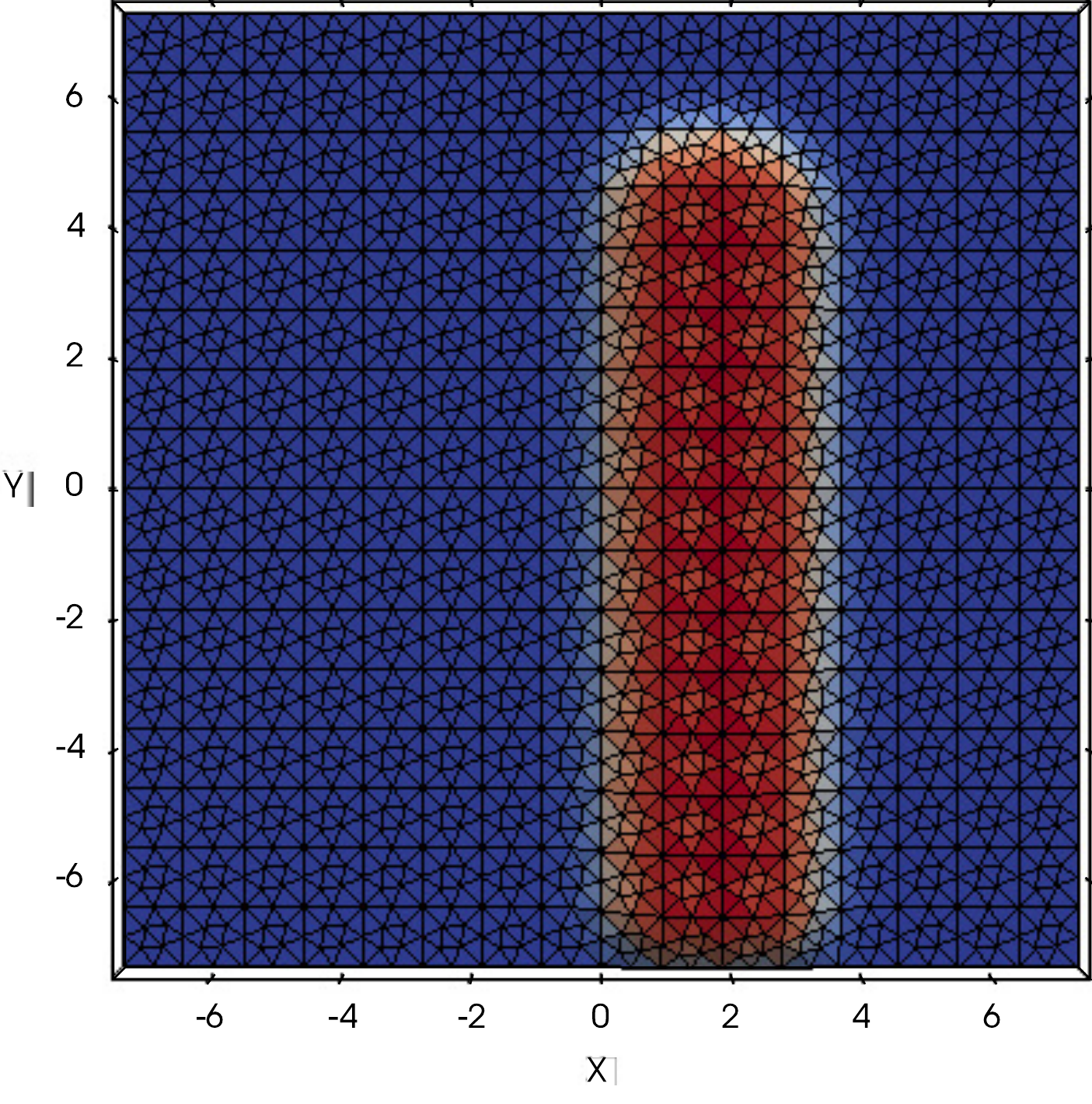} & \includegraphics[width=0.30\linewidth]{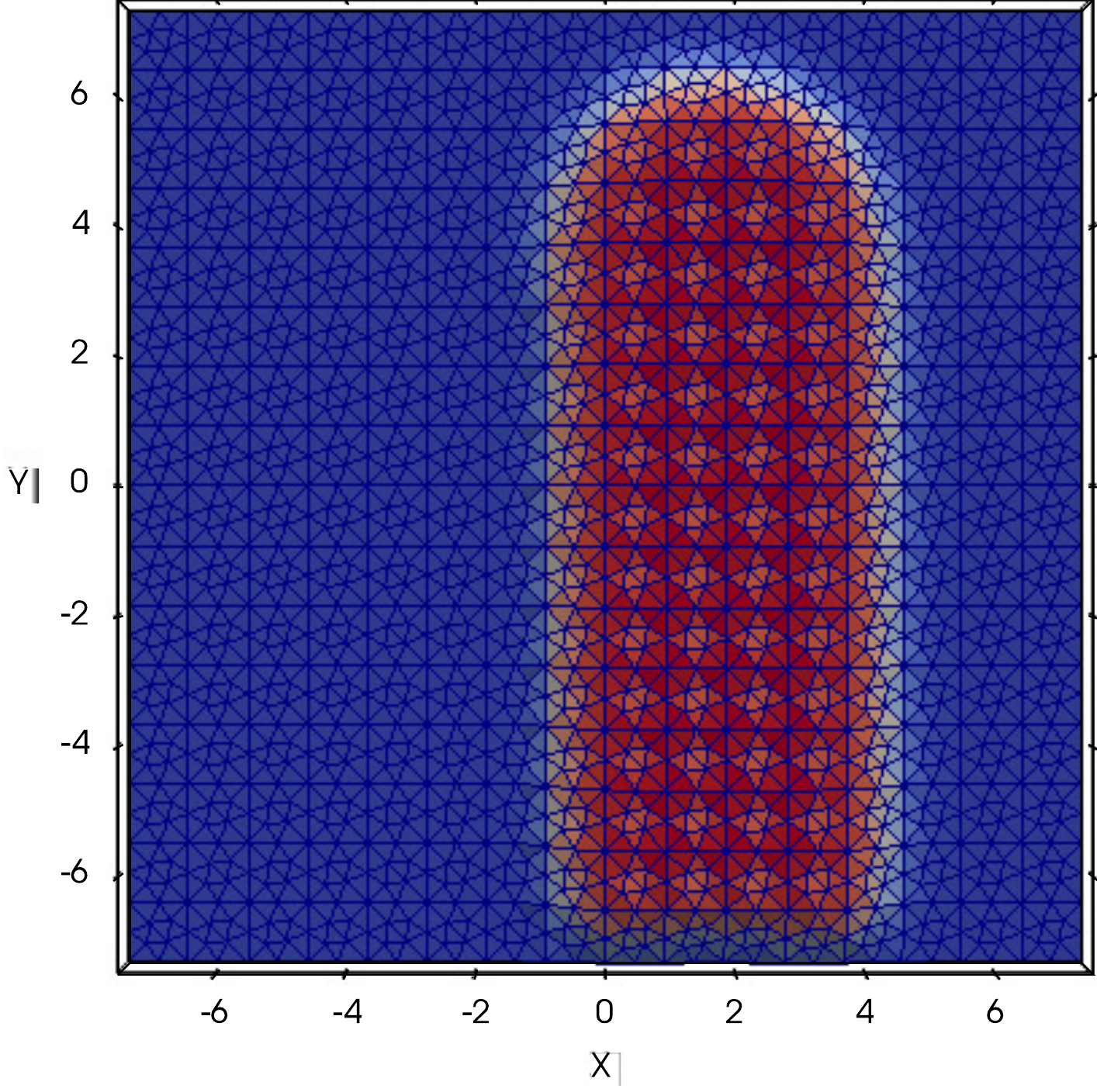} &
\includegraphics[width=0.30\linewidth]{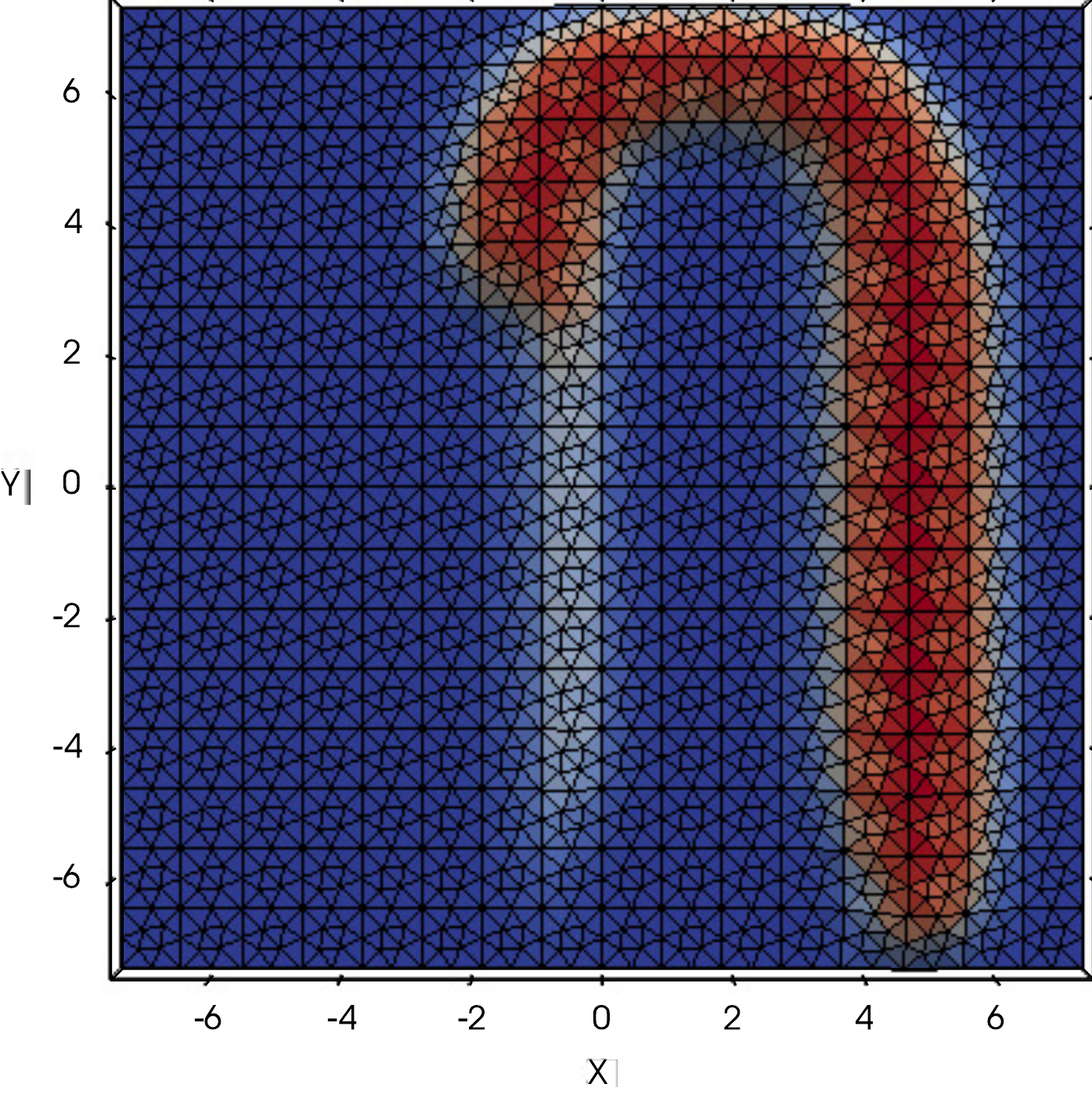}\\
\texttt{Time:} $t=5$ & \texttt{Time:} $t=10$ & \texttt{Time:} $t=20$\\
\includegraphics[width=0.30\linewidth]{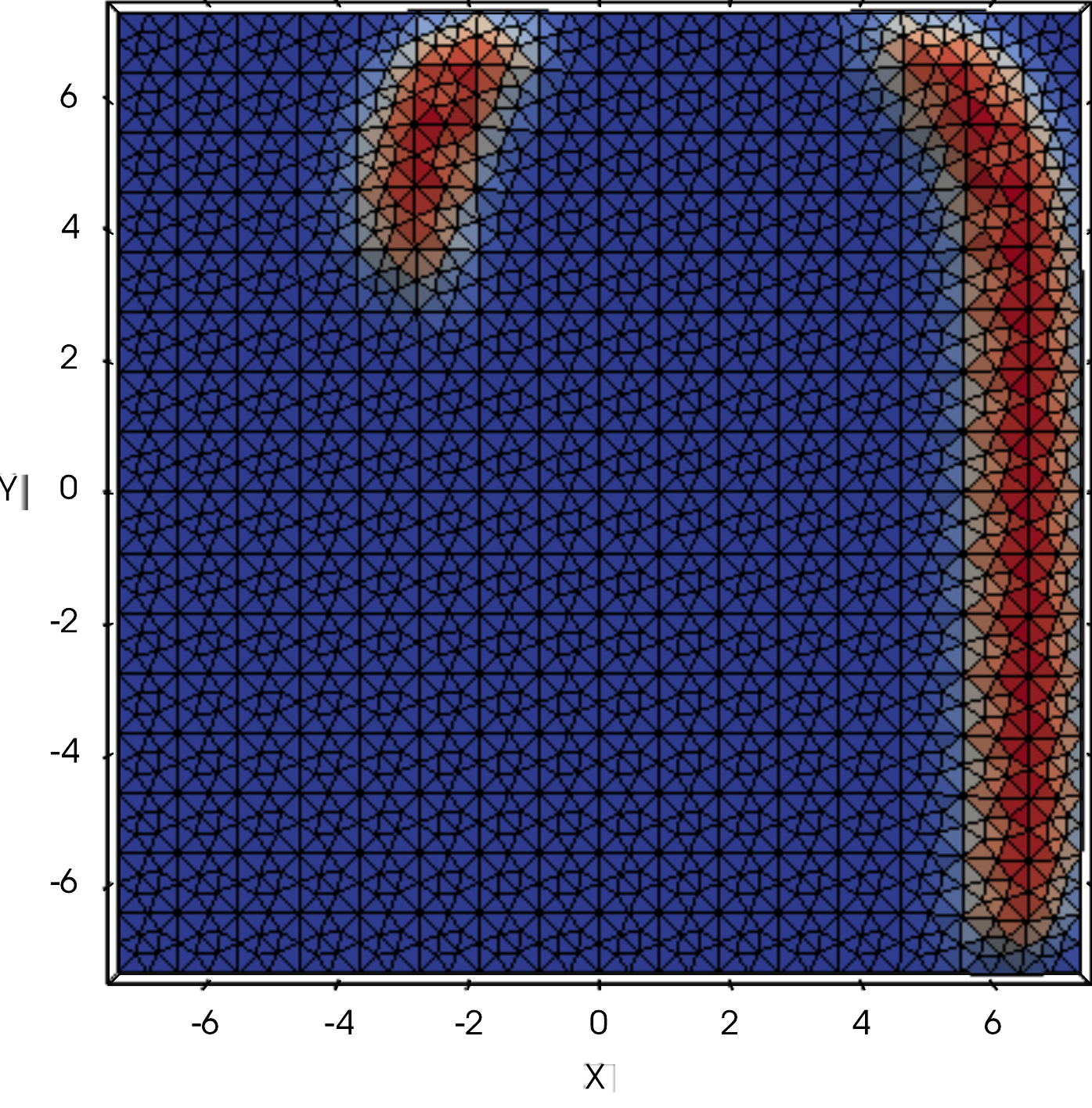} &
\includegraphics[width=0.30\linewidth]{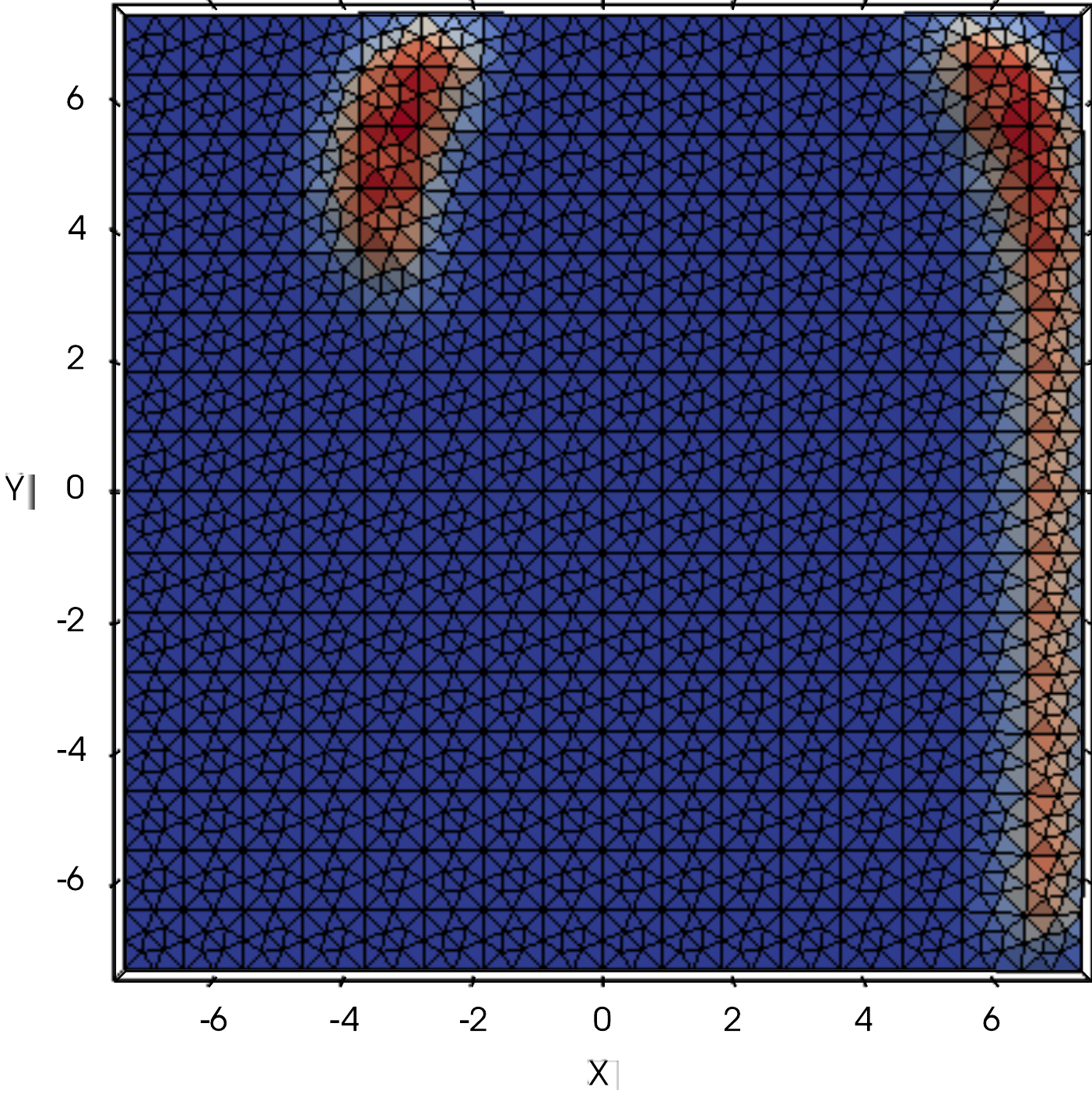} &
\includegraphics[width=0.30\linewidth]{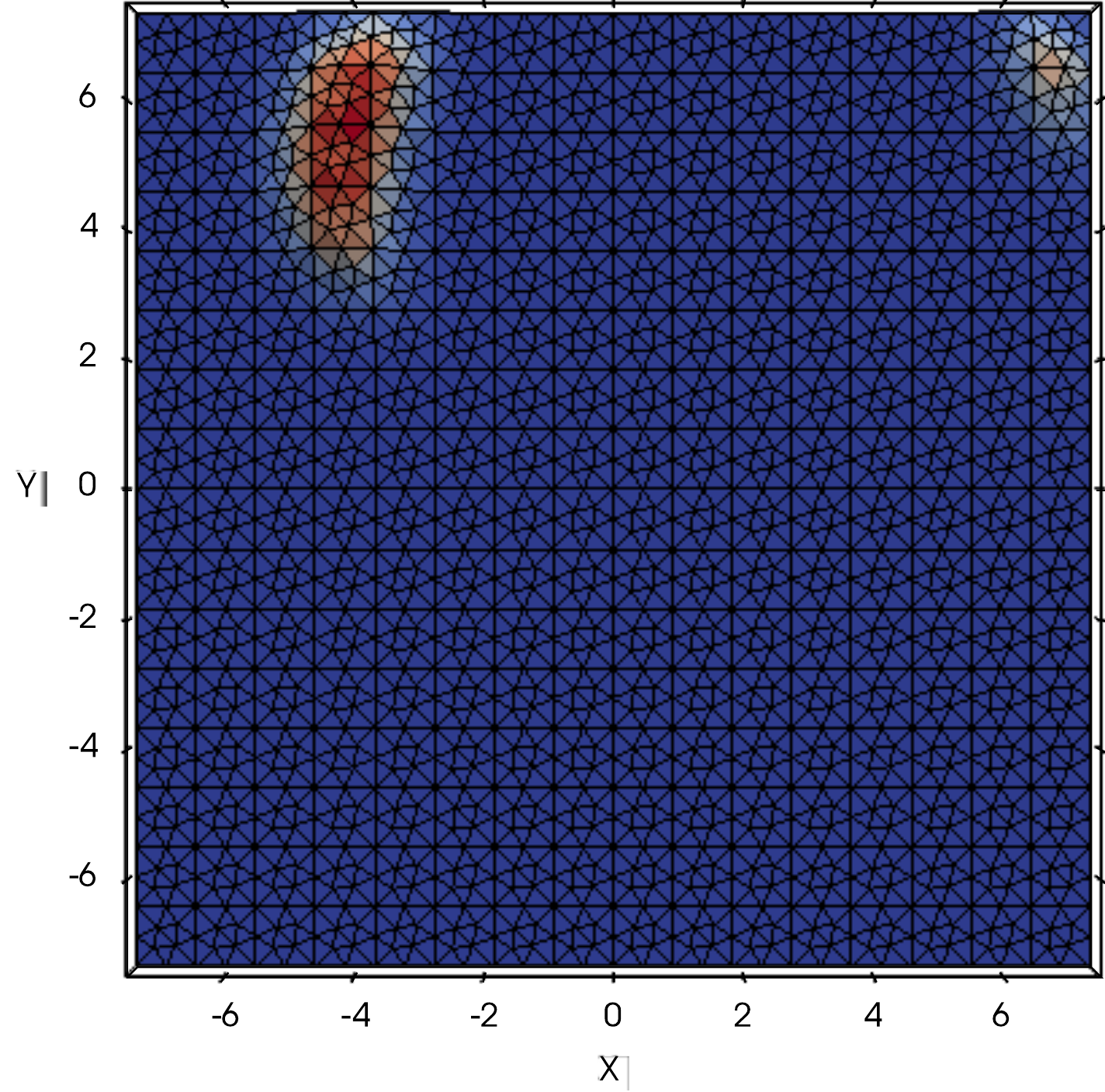}\\
\texttt{Time:} $t=30$ & \texttt{Time:} $t=40$ & \texttt{Time:} $t=60$\\
\includegraphics[width=0.30\linewidth]{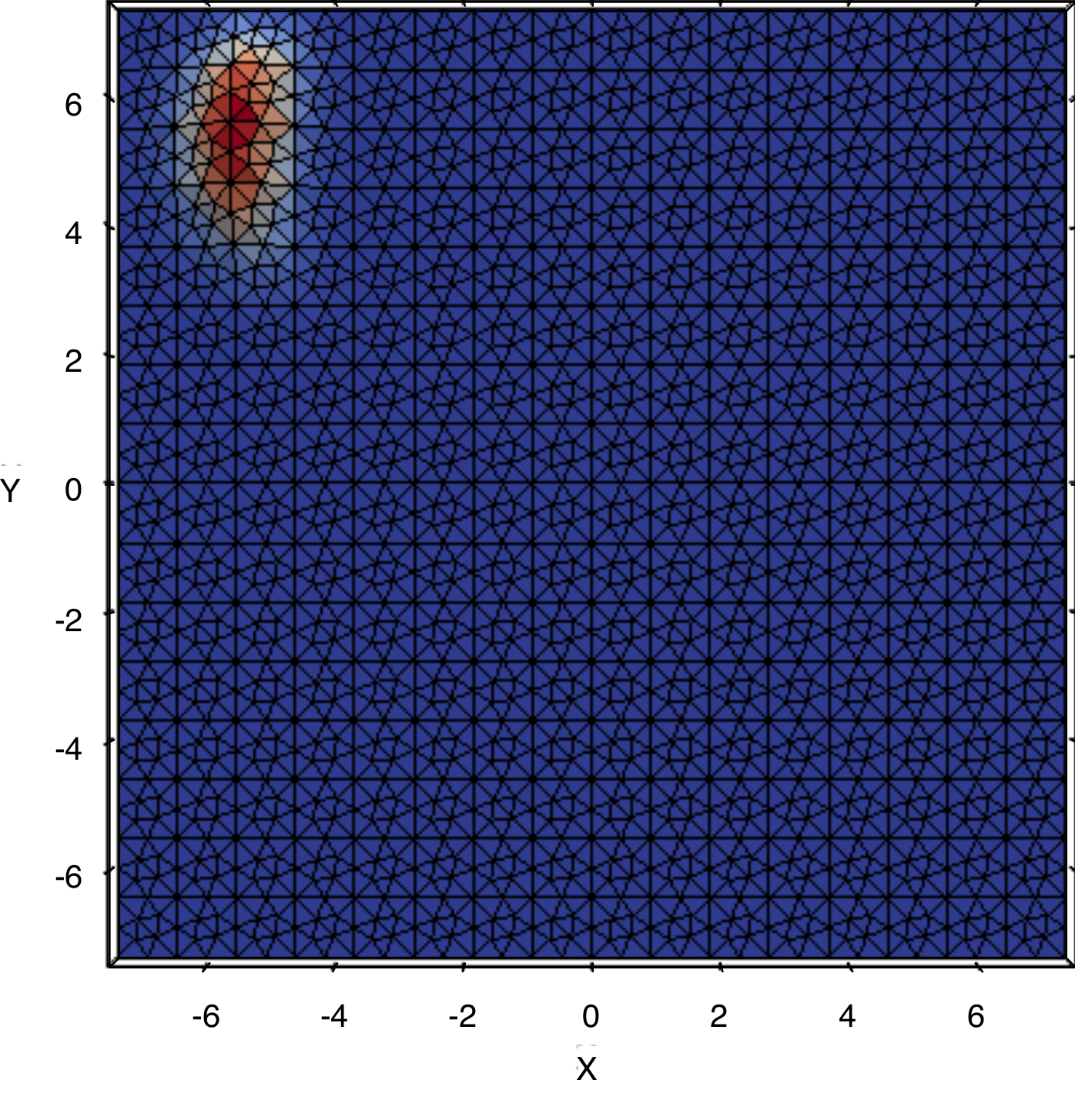} &
\includegraphics[width=0.30\linewidth]{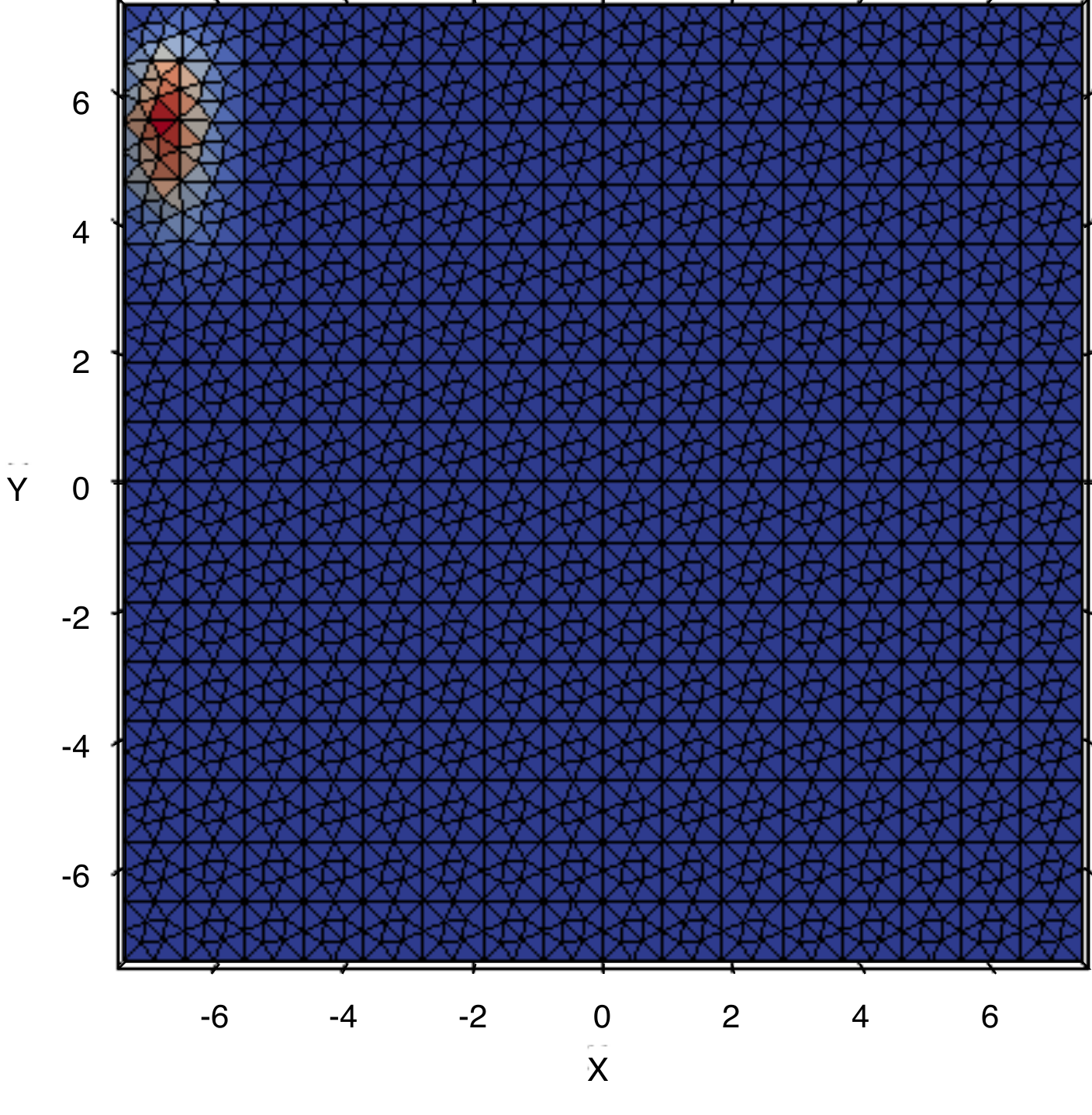} &\\
\texttt{Time:} $t=80$ & \texttt{Time:} $t=100$ &\\
\end{tabular}
\end{center}
\caption{Example 2, propagation of spiral waves of the Barkley model at different time levels with parameters detailed in text.} 
\label{fig-t2}
\end{figure}



\bibliographystyle{siam}
\bibliography{rm-ref}

\end{document}


there exists constants $a_1$, $a_2 >0$ and $a_3$, $a_4 \geq 0$, such that for any $(s,\xi)\in \RR^2$,
\[
| a_1 s f(s,\xi)+\xi g(s,\xi) | \geq a_2 |\xi|^2 - a_3\Big( a_1 |\xi|^2+|s|^2 \Big)-a_4
\]

To deal with the non linearity, define $T:X_\disc \times Y_\disc \to X_\disc \times Y_\disc$, where for $z=(z_1,z_2)\in X_\disc \times Y_\disc$, $w=(w_1,w_2)=T(z)$ is the solution to
\begin{equation}\label{rm-disc-pblm-lin}
\left.
\begin{aligned}
&\dsp\int_\O \Pi_{\disc}\dsp\frac{w_1-u^{(n)}}{\delta t^{(n+\frac{1}{2})}}(x) \Pi_\disc \varphi(\x)
+ \mu\dsp\int_\O \nabla_\disc w_1^{(n+1)}(\x) \cdot \nabla_\disc \varphi(\x)\ud \x\\
&\qquad=\dsp\int_\O f(\Pi_\disc w_1, \Pi_{\disc^{'}} w_2)\Pi_\disc \varphi(\x) \ud \x, \quad \forall \varphi \in X_\disc, \\
&\dsp\int_\O \Pi_{\disc^{'}}\dsp\frac{w_2-v^{(n)}}{\delta t^{(n+\frac{1}{2})}} \Pi_{\disc^{'}} \psi(\x) = \dsp\int_\O g(\Pi_\disc w_1, \Pi_{\disc^{'}} w_2)\Pi_\disc \psi(\x) \ud \x ,\quad \forall \psi \in Y_\disc. 
\end{aligned}
\right\}
\end{equation}
Using arguments similar to the proof of Lemma \eqref{lemma-est-rm-grad}, we can obtain
\begin{equation*}
\begin{aligned}
|| \Pi_\disc w_1 ||_{L^2(\O)} + || \nabla_\disc w_1 ||_{L^2(\O)^d} \leq
C|| f ||_{L^2(\O)}
\end{aligned}
\end{equation*}
and
\begin{equation*}
\frac{1}{2}\dsp\int_\O \Big[ |\Pi_{\disc^{'}} v^{(m)}(\x)|^2-|\Pi_{\disc^{'}} v^{(0)}(\x)|^2 \Big] \ud \x
\leq || g_\disc^{(m)} ||_{L^2(\O\times (0,T))} 
\; || \Pi_{\disc^{'}} v^{(m)} ||_{L^2(\O\times (0,T))}.
\end{equation*}

By Lemma \eqref{lemma-est-rm-grad}, we can show that there exists a constant $C$ not depending on $z$, such that $|| \Pi_\disc w_1||_{L^2(\O)} \leq C$, $|| \nabla_\disc w_1||_{L^2(\O)} \leq C$, and $|| \Pi_{\disc^{'}} w_2||_{L^2(\O)} \leq C$. Since $T$ is continuous, we apply Brouwer's fixed point theorem to conclude.

The HMM gradient discretisation is described as follows:
\begin{enumerate}
\item the discrete space is $X_{\disc}=\{ v=((v_{K})_{K\in \mathcal{M}}, (v_{\sigma})_{\sigma \in \mathcal{E}})\;:\; v_{K} \in \RR,\, v_{\sigma} \in \RR,
\}
$
\item the piecewise-constant function reconstruction $\Pi_\disc$ is given by
\[
\forall K\in\mathcal M\,:\,\Pi_\disc v=v_K\mbox{ on $K$},\\
\]
\item the gradient reconstruction $\nabla_\disc$ is defined by: $\forall v\in X_\disc,\; \forall K\in\mathcal M,\,\forall \sigma\in\mathcal E_K$,
\[
\forall K\in\mathcal M,\,\forall \sigma\in\mathcal E_K\,:\,\nabla_\disc v=\nabla_{K}v+
\frac{\sqrt{d}}{d_{K,\sigma}}(A_{K}R_{K}(v))_{\sigma}\mathbf{n}_{K,\sigma}\mbox{ on }D_{K,\edge},
\]
where $D_{K,\edge}$ is the convex hull of $\edge\cup\{x_K\}$ and
\begin{itemize}
\item $\nabla_{K}v= \frac{1}{|K|}\sum_{\sigma\in \edgescv}|\sigma|v_\edge\mathbf{n}_{K,\sigma}$,
\item $R_{K}(v)=(v_{\sigma}-v_{K}-\nabla_{K}v\cdot (\centeredge-x_{K}))_{\sigma\in\mathcal E_K}\in \RR^\edgescv$, \item $A_K$ is an isomorphism of the vector space ${\rm Im}(R_K)$.
\end{itemize}
\item the interpolant $J_\disc:H^1(\O)\to X_\disc$ is defined by:
\begin{equation}\label{def:ID}
w_k = \frac{1}{|K|}\int_K w(\x) \ud \x,\; \forall K \in \mathcal M  \mbox{ and }
w_\edge = \frac{1}{|\edge|}\int_\edge w(\x) \ud \x,\; \forall \edge \in \edges.
\end{equation}
\end{enumerate}